\numberwithin{equation}{section}
\newtheorem{claim}{\bf \t}[part]
\newtheorem{theorem}{Theorem}[section]
\newtheorem{lemma}{Lemma}[section]
\newtheorem{proposition}[theorem]{Proposition}
\newtheorem{remark}{Remark}[section]
\newtheorem{definition}{Definition}[section]
\def\t{\theta}
\newcommand \R{\mathbb{R}}
\begin{document}

\title[Low Mach Number Limit of Multidimensional Steady Flows on the Airfoil Problem]
{Low Mach Number Limit of Multidimensional Steady Flows on the Airfoil Problem}

\author{Mingjie Li}
\address{M.J. Li, College of Science, Minzu University of China, Beijing 100081, P. R. China}
\email{lmjmath@163.com}

	\author{Tian-Yi Wang}
	\address{T.-Y. Wang, Department of Mathematics, School of Science, Wuhan University of Technology,
		Wuhan, Hubei 430070, P. R. China;
		Gran Sasso Science Institute, viale Francesco Crispi, 7, 67100 L'Aquila, Italy}
	\email{tianyiwang@whut.edu.cn; tian-yi.wang@gssi.infn.it; wangtianyi@amss.ac.cn}
	
	\author{Wei Xiang}
	\address{W. Xiang, City University of Hong Kong, Kowloon Tong, Hong Kong, P. R. China}
	\email{weixiang@cityu.edu.hk}

\date{\today}

\begin{abstract}
In this paper, we justify the low Mach number limit of the steady irrotational Euler flows for the airfoil problem, which is the first result for the low Mach number limit of the steady Euler flows in an exterior domain. The uniform estimates on  the compressibility parameter $\varepsilon$, which is singular for the flows,  are established via a variational approach based on the compressible-incompressible difference functions. The limit is  on the H\"{o}lder space and is unique. 
Moreover, the convergence rate is of order $\varepsilon^2$. It is noticeable that, due to the feature of the airfoil problem, the extra force dominates the asymptotic decay rate of the compressible flow to the infinity. And the effect of extra force vanishes in the limiting process from compressible flows to the incompressible ones, as the Mach number goes to zero. 
\end{abstract}

\keywords{
Multidimensional,
low Mach number limit,
steady flow,
homentropic Euler equations,
convergence rate
}
\subjclass[2010]{
35Q31; 
35L65; 
76N15; 
35B40; 
}

\maketitle


\section{Introduction}

We are concerned with the low Mach number limit of steady homentropic Euler flows to the airfoil problem. The steady homentropic Euler equations with the extra force are written as:
\begin{equation}\label{CISE}
\begin{cases}
\mbox{div}\,(\rho u)=0,\\
\mbox{div}\,(\rho u\otimes u)+ \nabla p =\rho F,
\end{cases}
\end{equation}
where $x=(x_1, \cdots, x_n)\in \R^n, n\ge 3$, $u=(u_1,\cdots,u_n)$ is the velocity,  $\rho$, $p$ and $F$ represent the density, pressure, and extra forces respectively.
Moreover, $u\otimes u=(u_i u_j)_{n\times n}$ is an $n\times n$ matrix. Through this paper, we consider that the extra force $F$ is conservative. 
This is reasonable since this type of forces is quite natural and important in the reality. For instance, the gravity field is a conservative field. Another important example is the electric field. 
As the homentropic flow, the pressure is a function of density as:
\begin{equation}
p:=\frac{\tilde{p}(\rho)-\tilde{p}(1)}{\varepsilon^2},
\end{equation}
where $\varepsilon>0$ is the compressibility parameter as introduced in \cite{Schochet}. As usual, we require
\begin{equation}\label{conditiononpressure}
\tilde{p}'(\rho)>0,  \quad 2\tilde{p}'(\rho) + \rho \tilde{p}''(\rho) > 0 \qquad \mbox{for $\rho>0$}.
\end{equation}

We remark that condition \eqref{conditiononpressure} holds for the flows governed by the thermodynamic relation that $\tilde{p}= \rho^\gamma$ with $\gamma\geq 1$.
The sound speed of the flow is $$c:=\sqrt{p'(\rho)}=\frac{\sqrt{\tilde{p}'(\rho)}}{\varepsilon},$$
and the Mach number is  defined as $$M:=\frac{|u|}{c}=\frac{\varepsilon |u|}{\sqrt{\tilde{p}'(\rho)}}.$$ 

Formally, if $|u|$ is bounded and $\sqrt{\tilde{p}'(\rho)}$ does not vanish, the Mach number will go to $0$ as $\varepsilon\rightarrow0$. For this reason, the limit $\varepsilon\rightarrow0$ is called the low Mach number limit.

The expected corresponding homogeneous incompressible Euler equations as $\varepsilon\rightarrow0$ are written as:
\begin{equation}\label{CHE}
\begin{cases}
\mbox{div}\, \bar u=0,\\
\mbox{div}\,(\bar u\otimes \bar u)+ \nabla \bar p=F,
\end{cases}
\end{equation}
where $\bar u=(\bar u_1,\cdots,\bar u_n)$ and $\bar p$ represents the velocity and pressure, respectively, while density $\bar{\rho}\equiv1$.

To study the low Mach number limit, we should start from the case with sufficiently small Mach number. 
The flow is subsonic when the Mach number $M < 1$. 
Since the equations corresponding to the mixed-type characteristics in the compressible Euler equations are elliptic for the subsonic flows, 
we can expect nice regularity 
compared to those related to the transonic flows or supersonic flows.
 
The airfoil problems studies the flow past through an exterior domain with the slip boundary condition, which equals to consider the flows around uniform motion body like the airfoil or car after the Galilean transformation. Both the compressible case and incompressible case have been studied by many authors. The first result is due to Frankl and Keldysh \cite{Frankl}, who studied the subsonic flows around a two dimensional airfoil and proved the existence and uniqueness for small data by the method of successive approximations. 
By the variational method, Shiffman \cite{Shiffman2} proved that, if the speed of the flow at the infinity, $u_{\infty}$, is less than some critical speed, then there exists a unique subsonic potential flow around a given profile with finite energy. Shortly afterwards, Bers \cite{Bers3} improved the uniqueness results of Shiffman. Finn and Gilbarg \cite{Finn1} proved the uniqueness of the two dimensional potential subsonic flow about a bounded obstacle with given circulation and velocity at the infinity. 
All the above results are related to the two dimensional case. For the three (or higher) dimensional case, Finn and Gilbarg \cite{Gilbarg1} proved the existence, uniqueness and the asymptotic behavior with implicit restrictions on the Mach number. Payne and Weinberger \cite{Payne} improved
the results soon after. 
Later, Dong and Ou  \cite{Dong2} extended the results of Finn and Gilbarg
\cite{Gilbarg1} to any Mach number $M < 1$ and to arbitrary dimensions by
the direct method of the calculus of variations and the standard Hilbert space method. Recently, the result has been extended to the case with conservative force in \cite{Gu-Wang2}. 
The corresponding incompressible case is considered by Ou in \cite{ou2, ou1}. 
On the other hand, recently, there are many literitures on the steady compressible Euler equations (see \cite{CHWX1,DWX,FX,Gu-Wang1,QX,XZZ} for examples). So it is time for us to consider the low Mach number limit of steady flows.

It is well-known in physics that 
the compressible flow is expected to perform like an 
incompressible flow, when the Mach number is sufficiently small. However, how to rigorously justify the physical observation of this limit is
a challenging mathematical problem, since it is a singular limit and singular phenomena are expected in the limit process.
The first theory of the low Mach number limit is due to Janzen and Rayleigh (see \cite[Sect. 47]{Schiffer}, \cite{VD}), 
which concerned with the steady irrotational flow. Their method of the expansion of solutions in power with respect to the Mach number was used both as a computational tool and as a method for the proof of the existence of solutions of the compressible flow.  
Klainerman and Majda \cite{KM1, KM2} proved the convergence of compressible to incompressible flow by directly
obtaining estimates for the scaled form of the partial differential equations (also see Ebin \cite{Edin}). In particular, under the suitable initial data, they showed the convergence rates of both Euler equations and Navier-Stokes equations are $\varepsilon$ order on velocity and $\varepsilon^2$ on density and pressure.
By using the fast decay of acoustic waves, Ukai in \cite{Ukai} verified the low Mach number limit for the general  data, while the exterior domain cases were considered in \cite{ Isozaki1}. 
The major breakthrough on the general initial data  is due to M\'{e}tivier and Schochet \cite{MS}, which is extended to the exterior domain problem by Alazard \cite{alazard1}.
 For the one dimensional case, under the $BV$ space, the low Mach number limit have been considered \cite{CCZ}. 
   For the steady Euler flow, Li-Wang-Xiang \cite{Li-Wang-Xiang} considered the infinitely long nozzle problem, which is the first rigorous analysis for the steady Euler equation. For other fluid models, see \cite{Feireisl-N, JJL3, JJLX, Lions P.-L.01, Masmoudi, MS} 
 and the references therein. For another type of incompressible limit, please see \cite{CHWX, Masmoudi2} and reference therein.


 All the results mentioned above are related to the case that the boundary of the profile is smooth. For the case that the profile is a polygon, recently, Elling \cite{ve} showed that  when the Mach number is sufficiently small but nonzero, the classical solution of the irrotational steady Euler flows  around a polygon does not exist. 



In this paper, we prove the limit is convergent strongly in the H\"older norm and is unique. Moreover, the convergence rate of order $\varepsilon^2$ is justified, which is higher than the one in Klainerman-Majda \cite{KM1}, due to the irrotational property.
More precisely, 
the main difficulties for the steady Euler flow are how to get the uniform estimates on the velocity. Therefore, we introduce the variational functional with respect to the compressible-incompressible difference function to obtain the uniform $L^2$ estimates. Moreover, not only are the incompressible and compressible velocities the minimizer of the respective functional but also the compressible-incompressible difference velocity is the minimizer of a functional. The effect of the extra force comes out in the regularity lifting to the H\"older space naturally. In addtion, the effect dominates the asymptotic rate to the infinity for the compressible flows, but the incompressible flows are free from this effect. The vanishing phenomenon is rigorously justified by the proper expansion based on a suitable formulation on the Bernoulli's law and a refined cut-off function for the low Mach number limits. Another difficulty is to find a proper way to show the convergence of the pressure. Based on the observation that the pressure of the incompressible flow is well-defined up to a constant by the Bernoulli's law, we establish the convergence on the gradient of pressure.

The rest of this paper is organized as follows.
In Section 2, we formulate the low Mach number limit of the steady irrotational Euler flows corresponding to the airfoil problem mathematically, and then introduce the main theorem of this paper.
In Sections 3, we derive a variational formulation to show  
the existence and uniqueness of a minimizer of the variational problem. In Section 4, we show the minimizer is actually the unique solution of the compressible subsonic Euler flow in $\R^n$ of the airfoil problem with  $C^{\alpha}$-regularity. 
Finally, the convergence rate of the low Mach number limit of the steady irrotational Euler flows is established.

\section{Airfoil Problem and the Low Mach number Limit}
In this section, we will first introduce both the incompressible and compressible airfoil problem, then the low Mach number limit, and finally the main theorem of this paper. 

\subsection{Airfoil problem}
Let $\mathcal{D}(\Gamma)$ (airfoil) be a bounded and connected domain in $\R ^n$ $(n\geq3)$ such that its boundary $\Gamma$ consists of one or several closed and isolated $n-1$ dimensional $C^{2,\alpha}$ (for some $0\leq\alpha\leq1$) hypersurfaces. Let $\Omega$ be the exterior domain
of $\mathcal{D}(\Gamma)$, \textit{i.e.}, $\Omega:=\R^n\backslash\mathcal{D}$, which is connected (see Fig \ref{fig:3dairfoillm}). Then both the incompressible and compressible airfoil problem can be formulated as the following problem. 

\begin{figure}
	\centering
	\includegraphics[width=0.5\linewidth]{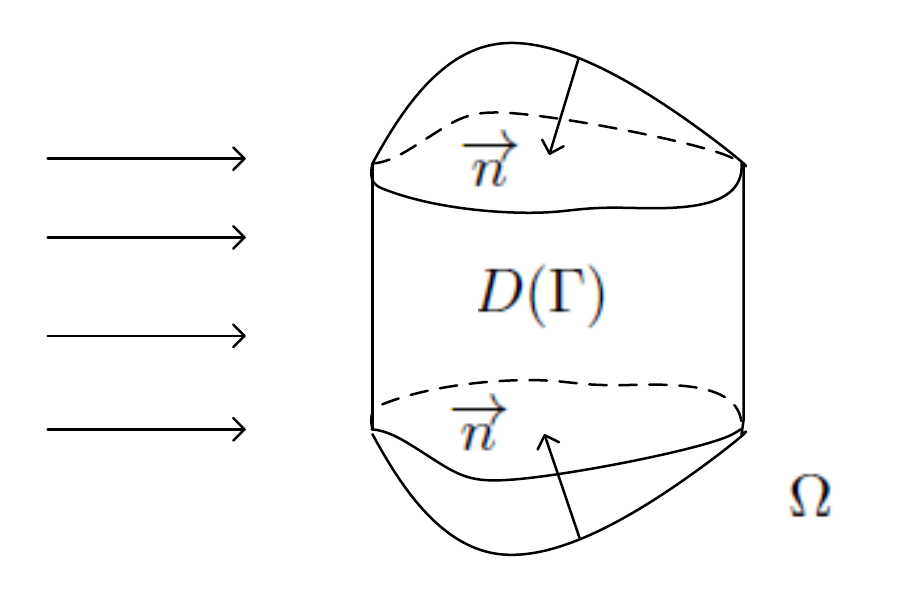}
	\caption{Airfoil Problem}
	\label{fig:3dairfoillm}
\end{figure}

\textbf{Problem 1}.  Let $n\geq3$. Find functions $(\rho,  u, p)$ satisfy \eqref{CISE} or functions $(u,p)$ satisfy
\eqref{CHE}
with the slip boundary
condition
\begin{equation}\label{boundarycondition}
 u\cdot \vec{n}=0\ \ \mbox{on}\ \ \Gamma,
\end{equation}
where $\vec{n}=(n_1,\cdots,n_n)$ denotes the unit inward normal of domain
$\mathcal{D}(\Gamma)$. Moreover, the limits
\begin{equation}\label{u-inftycondtion}
\lim_{|x|\rightarrow \infty} u(x)=u_\infty,
\end{equation}
and
\begin{equation}\label{rhoinftycondtion}
\lim_{|x|\rightarrow \infty} \rho(x)=1,
\end{equation}
exist and are finite.

\begin{remark}
By the Galilean invariance of the Newton fluid, $\mathcal{D}$ can be assumed to be stationary. That is the reason that we assign boundary condition \eqref{boundarycondition} on $\Gamma$. Without loss of the generality, we also assume that $u_{\infty}=(q_{\infty},0, \cdots,0)$. 
\end{remark}

Since the extra force is conservative form, and velocity is asymptotic irrotational at infinite, we assume the flow is irrotational, which means the vorticity of the flow velocity 
is zero, {\it i.e.}, 
\begin{equation}
\mbox{curl} u=0.
\end{equation}

\subsection{Incompressible airfoil problem}
Now let us consider the existence of the incompressible flow for the airfoil problem.
Due to \cite{Dong2,ou2,ou1}, the suitable function space is of the following. 

\begin{definition}\label{hilb}
$\mathcal{V}$ is the Hilbert space for $n\geq3$, which is the completing of space $\mathcal{V}_0$ under the norm	
\begin{equation*}
\|v\|_{\mathcal{V}} 
= \left(\int_\Omega |\nabla v|^2dx\right)^{\frac{1}{2}}.
\end{equation*}
Here
$\mathcal{V}_0$ is the set of all the functions which are the restrictions of the $C_{c}^{\infty}(\mathbb{R}^n)$ functions on $\Omega$:
	\begin{equation*}
	\mathcal{V}_0 = \left\{v(x) ~ |~  v(x)=V(x) ,~ x\in\Omega, \mbox{~for ~some} ~V\in C_c^{\infty} (\R^n)
	\right\}.
	\end{equation*}
\end{definition}

We remark that for the $C_c^{\infty}$ functions, $L^2$ norm of the gradient can be bounded by the norm $\|\cdot\|_{\mathcal{V}}$ due to
the following Hardy type inequality. 
\begin{theorem}\label{Hardy}
	There exists a constant $c(n,  \Omega)$ such that for any $v\in\mathcal{V}$
	\begin{equation}\label{Hardyforv}
	\int_{\Omega}\frac{v^2}{1+|x|^2} dx\leq c(n,  \Omega)\int_{\Omega}|\nabla v|^2 dx, \qquad\mbox{for} \qquad n\geq 3.
	\end{equation}
\end{theorem}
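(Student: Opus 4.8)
The plan is to prove the inequality first for $v$ in the dense subspace $\mathcal{V}_0$ and then pass to the completion $\mathcal{V}$ by density, so fix $v=V|_\Omega$ with $V\in C_c^\infty(\R^n)$. The naive idea of extending by the full-space Hardy inequality applied to $V$ fails, because $\int_{\R^n}|\nabla V|^2\,dx=\int_\Omega|\nabla v|^2\,dx+\int_{\mathcal D}|\nabla V|^2\,dx$ and the contribution over the obstacle $\mathcal D$ cannot be controlled by the right-hand side. I would therefore argue intrinsically on $\Omega$ and split it into a far region and a near region. Fix $R>0$ with $\mathcal D\subset B_R$, and set $\Omega_{\mathrm{far}}=\{|x|>R\}\subset\Omega$ and $\Omega_{\mathrm{near}}=\Omega\cap B_R$.

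First I would treat the far field, where the weight satisfies $\frac1{1+|x|^2}\le\frac1{|x|^2}$ and the genuine Hardy mechanism is available. Using the vector field $F=x/|x|^2$, which is smooth on $\{|x|\ge R\}$ and satisfies $\mathrm{div}\,F=(n-2)/|x|^2$, the divergence theorem on $\Omega_{\mathrm{far}}$ gives
\begin{equation*}
(n-2)\int_{\Omega_{\mathrm{far}}}\frac{v^2}{|x|^2}\,dx=-2\int_{\Omega_{\mathrm{far}}}v\,\nabla v\cdot F\,dx-\frac1R\int_{|x|=R}v^2\,dS,
\end{equation*}
where the compact support of $v$ kills the boundary term at infinity and the inner boundary term has the favourable sign (the outward normal of $\Omega_{\mathrm{far}}$ is $-x/|x|$). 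Dropping that nonpositive term and applying Cauchy--Schwarz to $|\nabla v\cdot F|\le|\nabla v|/|x|$ yields the exterior Hardy inequality $\int_{\Omega_{\mathrm{far}}}\frac{v^2}{|x|^2}\,dx\le\frac{4}{(n-2)^2}\int_{\Omega}|\nabla v|^2\,dx$, where $n\ge3$ is used precisely to keep $(n-2)^2>0$.

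The main obstacle is the near field, since $\mathcal D$ need not be star-shaped and $v$ carries no boundary condition on $\Gamma$, so no elementary Hardy or Poincar\'e estimate is available there. My resolution is to use the bounded connected Lipschitz domain $U=\Omega\cap B_{2R}$ together with the Poincar\'e--Wirtinger inequality anchored on the annulus $A=\{R<|x|<2R\}\subset\Omega_{\mathrm{far}}$: there is $C(U,A)$ with $\int_U|v-v_A|^2\,dx\le C\int_U|\nabla v|^2\,dx$, where $v_A$ denotes the average of $v$ over $A$. This gives $\int_{\Omega_{\mathrm{near}}}v^2\,dx\le 2C\int_\Omega|\nabla v|^2\,dx+2|U|\,v_A^2$, and the mean term is controlled by the far field already handled, since by Jensen and $A\subset\Omega_{\mathrm{far}}$ one has $v_A^2\le\frac{1+4R^2}{|A|}\int_{A}\frac{v^2}{1+|x|^2}\,dx\le\frac{1+4R^2}{|A|}\int_{\Omega_{\mathrm{far}}}\frac{v^2}{|x|^2}\,dx$. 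Feeding in the far-field bound makes the whole right-hand side a multiple of $\int_\Omega|\nabla v|^2\,dx$; since $\frac1{1+|x|^2}\le1$ on $\Omega_{\mathrm{near}}$, this controls $\int_{\Omega_{\mathrm{near}}}\frac{v^2}{1+|x|^2}\,dx$ as well.

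Adding the two regions produces a constant $c(n,\Omega)$ with $\int_\Omega\frac{v^2}{1+|x|^2}\,dx\le c(n,\Omega)\int_\Omega|\nabla v|^2\,dx$ for all $v\in\mathcal V_0$. Finally I would pass to $\mathcal V$ by density: for $v_k\in\mathcal V_0$ with $\nabla v_k\to\nabla v$ in $L^2(\Omega)$, applying the established inequality to the differences $v_k-v_l$ shows that $\{v_k\}$ is Cauchy in the weighted space $L^2(\Omega,\frac{dx}{1+|x|^2})$, hence converges there to a representative of $v$, and the inequality is preserved in the limit (equivalently, by Fatou's lemma). This is also the step that realizes the abstract completion $\mathcal V$ as a concrete space of functions with finite weighted $L^2$ norm.
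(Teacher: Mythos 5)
Your argument is correct. Note that the paper itself gives no proof of Theorem \ref{Hardy}: it is stated as a known fact about the space $\mathcal{V}$, with the surrounding discussion deferring to the references \cite{Dong2,ou2,ou1} (in particular the Poincar\'e-type inequality of Lu--Ou). So there is no in-paper proof to compare against, and what you have written is a legitimate self-contained substitute. Your two-region scheme is the natural one: the exterior Hardy inequality on $\{|x|>R\}$ via the vector field $x/|x|^2$ is clean (the sign of the inner boundary term and the role of $n\geq 3$ through $(n-2)^2>0$ are both handled correctly), and the near-field patch via Poincar\'e--Wirtinger on $U=\Omega\cap B_{2R}$ with the mean anchored in the annulus $A$, then reabsorbed into the far-field bound, correctly avoids both the lack of a boundary condition on $\Gamma$ and the failure of the naive extension-to-$\R^n$ argument, which you rightly flag. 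Two small points you should make explicit if you write this up: (i) $\Omega\cap B_{2R}$ is connected (any point of $\Omega\cap B_R$ joins to $A$ by a path in $\Omega$ that must cross $\{|x|=3R/2\}$ before leaving $B_{2R}$), which is needed for the Poincar\'e--Wirtinger inequality with mean taken over the sub-region $A$; and (ii) the final density step is also what identifies the abstract completion $\mathcal{V}$ with a genuine space of locally integrable functions, as you observe. Neither is a gap, just a line each.
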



Note that the steady irrotational incompressible Euler flow is governed by the following equations: 
\begin{equation}\label{CHEB}
\begin{cases}
\mbox{div}\, \bar{u}=0,\\
\mbox{div}\,(\bar{u}\otimes \bar{u})+ \nabla \bar{p}=F,\\
\mbox{curl}\, \bar{u}=0.
\end{cases}
\end{equation}

Here,  the density $\bar{\rho}\equiv 1$, and conservative force $F$ could be written as $F=\nabla\phi$. By $\eqref{CHEB}_3$, 
we can introduce the velocity potential $\bar{\varphi}$ such that 
\begin{equation}
\bar{u}=\nabla \bar{\varphi}.
\end{equation}

Therefore, the incompressible irrotational Euler flows of \textbf{Problem 1} can be found by looking for solution  $\bar{\varphi}$, which satisfies 
\begin{equation}\label{li1}
\begin{cases}
\Delta \bar{\varphi} =0, \quad &x\in\Omega,\\
\frac{\partial\bar{\varphi}}{\partial \textbf{n}}=0, \quad &x\in\Gamma,\\
\lim\limits_{|x| \rightarrow \infty}\nabla \bar{\varphi} =(q_\infty, 0, \cdots, 0),
\end{cases}
\end{equation}

Due to the asymptotic behavior of $\nabla\bar{\varphi}$ at infinity (see $\eqref{li1}_3$), we can not expect the $L^2$ estimate of $\nabla\bar{\varphi}$. So we introduce $\bar{\psi}$, such that
$$
\bar{\varphi}=\bar{\psi}+q_\infty x_1.
$$
We will use the variational approach, which is also successfully applied to the incompressible irrotational R\'{e}thy flow problem recently (see \cite{CDX}), to show the existence of solutions of \eqref{li1}.
More precisely, let us consider the following variational problem (see \cite{ou2,ou1})
$$
\min_{\bar{\psi}\in \mathcal{V} 
}\Big(\int_\Omega \frac{1}{2}|\nabla\bar{\psi}|^2 dx
-q_\infty \int_\Gamma n_1 \bar{\psi} dS\Big).
$$
In \cite{ou1}, Ou proved that this minimization problem admits classical solutions and the solution is unique up to an added constant. Indeed, if $\bar{\psi}$ is a solution of the variational problem, then for any $\eta \in C_c^\infty(\R^n)$, we have
$$
0=\int_\Omega \nabla\bar{\psi}\nabla\eta dx + q_\infty \int_\Gamma n_1 \eta dS
=-\int_\Omega \Delta \bar{\psi} \eta dx +\int_\Gamma 
\left(\frac{\partial \bar{\psi}}{\partial \textbf{n}}+q_\infty n_1\right)\eta dS.
$$
Thus, $\bar{\psi}$ satisfies 
\begin{equation}\label{ell}
\begin{cases}
\displaystyle   \Delta \bar{\psi} =0, \quad &x\in\Omega,\\
\displaystyle   \frac{\partial\bar{\psi}}{\partial \textbf{n}}=
-q_\infty n_1, \quad &x\in\Gamma,
\end{cases}
\end{equation}
in the distributional sense. 
By the standard elliptic theory, $\bar{\psi}$ is smooth up to the boundary and satisfies \eqref{ell} in the classical sense. Moreover, as $x$ approaches to the infinity, $|\nabla \bar\psi|$ tends to zero. Then, the function 
$\bar{\varphi}=\bar{\psi}+q_\infty x_1$ satisfies the equation (\ref{li1}). In summary, we have the  following result.

\begin{theorem}
Problem (\ref{li1}) admits a unique classical solution $\bar{\varphi}=\bar{\psi}+q_\infty x_1$ up to a constant with $\bar\psi\in \mathcal{V}$. Here $\mathcal{V}$ is the Hilbert space defined in Definition \ref{hilb}. Furthermore,
	\begin{equation}\label{barpsiL2}
	\int_{\Omega}|\nabla\bar{\psi}|^2 dx \leq C(q_{\infty}, \Omega),
	\end{equation}
	and
	\begin{equation}\label{barpsiLinfty}
	|\nabla\bar{\psi}|(x) \leq C(q_{\infty}, \Omega) (1+|x|)^{-\frac{n}{2}}.
	\end{equation}
\end{theorem}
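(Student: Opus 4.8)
The plan is to realize $\bar\psi$ as the unique minimizer of the convex functional
\[
J(\bar\psi)=\int_\Omega \tfrac12|\nabla\bar\psi|^2\,dx-q_\infty\int_\Gamma n_1\bar\psi\,dS
\]
on the Hilbert space $\mathcal V$ of Definition \ref{hilb}, to upgrade this minimizer to a classical solution of \eqref{ell} by elliptic regularity, and then to extract the two quantitative bounds \eqref{barpsiL2}--\eqref{barpsiLinfty} from the weak formulation together with Cauchy-type estimates for harmonic functions. The first task is to check that $J$ is well defined and coercive on $\mathcal V$. The only delicate term is the boundary integral: by the trace theorem on a fixed bounded neighborhood of $\Gamma$ followed by the Hardy inequality of Theorem \ref{Hardy}, one has $\big|\int_\Gamma n_1\bar\psi\,dS\big|\le C(\Omega)\|\nabla\bar\psi\|_{L^2(\Omega)}$, so $\bar\psi\mapsto\int_\Gamma n_1\bar\psi\,dS$ is a bounded linear functional on $\mathcal V$. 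Hence $J(\bar\psi)\ge \tfrac12\|\bar\psi\|_{\mathcal V}^2-C|q_\infty|\,\|\bar\psi\|_{\mathcal V}$, which is bounded below and coercive.

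Existence and uniqueness of the minimizer then follow from the direct method: the Dirichlet term is convex and strongly continuous, hence weakly lower semicontinuous, while the boundary functional is weakly continuous, so $J$ is weakly lower semicontinuous; coercivity and the reflexivity of $\mathcal V$ produce a minimizer, and strict convexity of the Dirichlet term on $\mathcal V$ (where the gradient seminorm is a genuine norm, constants being excluded) forces it to be unique. Consequently $\bar\varphi=\bar\psi+q_\infty x_1$ is determined up to the additive constant inherent to any velocity potential. As already computed above, the first variation of $J$ shows $\bar\psi$ solves \eqref{ell} in the distributional sense; since $\Gamma$ is $C^{2,\alpha}$, standard Schauder estimates up to the boundary promote $\bar\psi$ to a classical solution, and harmonicity gives interior smoothness, so $\bar\varphi$ solves \eqref{li1} in the classical sense.

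For the energy bound \eqref{barpsiL2}, I would extend the weak identity $\int_\Omega\nabla\bar\psi\cdot\nabla\eta\,dx+q_\infty\int_\Gamma n_1\eta\,dS=0$ from $\eta\in C_c^\infty(\R^n)$ to all $\eta\in\mathcal V$ by density and the continuity established above, and then take $\eta=\bar\psi$. This yields $\|\nabla\bar\psi\|_{L^2(\Omega)}^2=-q_\infty\int_\Gamma n_1\bar\psi\,dS\le C|q_\infty|\,\|\nabla\bar\psi\|_{L^2(\Omega)}$, whence $\|\nabla\bar\psi\|_{L^2(\Omega)}\le C(q_\infty,\Omega)$. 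The pointwise decay \eqref{barpsiLinfty} is then obtained from the observation that each component $\partial_i\bar\psi$ is itself harmonic in $\Omega$: for $|x|$ large enough that $B_{|x|/2}(x)\subset\Omega$, the $L^2$ mean-value (Cauchy) estimate for harmonic functions gives $|\nabla\bar\psi(x)|\le C|x|^{-\frac n2}\|\nabla\bar\psi\|_{L^2(B_{|x|/2}(x))}\le C|x|^{-\frac n2}\|\nabla\bar\psi\|_{L^2(\Omega)}$; combining with \eqref{barpsiL2} and the local boundedness of $|\nabla\bar\psi|$ on the compact part of $\Omega$ yields \eqref{barpsiLinfty}.

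I expect the main obstacle to be the well-posedness of the variational problem on the exterior domain, where the natural energy space $\mathcal V$ carries only the homogeneous gradient norm. Controlling the linear boundary term and justifying the use of $\bar\psi$ itself as a test function both hinge on converting that gradient norm into genuine control of $\bar\psi$ near $\Gamma$, which is exactly what the Hardy inequality \eqref{Hardyforv} supplies; by contrast, once the global energy bound is secured, the decay rate $(1+|x|)^{-\frac n2}$ drops out cleanly from interior estimates and is not itself the difficulty.
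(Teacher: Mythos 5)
Your proposal is correct and follows essentially the same route as the paper: the paper sets up exactly the minimization of $\int_\Omega \tfrac12|\nabla\bar\psi|^2\,dx-q_\infty\int_\Gamma n_1\bar\psi\,dS$ over $\mathcal V$, computes the same first variation to obtain \eqref{ell}, and invokes elliptic regularity, delegating the existence/uniqueness of the minimizer and the quantitative bounds \eqref{barpsiL2}--\eqref{barpsiLinfty} to the cited work of Ou. Your write-up simply fills in those outsourced details (coercivity via trace plus the Hardy inequality \eqref{Hardyforv}, the energy identity with $\eta=\bar\psi$, and the $|x|^{-n/2}$ decay from the mean-value property of the harmonic derivatives), all of which is sound.
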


Finally, we recall that the Bernoulli law for the incompressible irrotational flow is that
\begin{equation}
\nabla\bar{p}=\nabla\left( \phi-\frac{|\bar{u}^2|}{2}\right).
\end{equation}

\subsection{Compressible airfoil problem and the low Mach number limit}
The irrotational compressible Euler flow with low Mach number can be written as
\begin{equation}\label{CISEV}
\begin{cases}
\mbox{div}\,(\rho^{\varepsilon} u^{\varepsilon})=0,\\
\mbox{div}\,(\rho^{\varepsilon} u^{\varepsilon}\otimes u^{\varepsilon})+ \nabla p^{\varepsilon} =\rho^\varepsilon F,\\
\mbox{curl}\, u^{\varepsilon}=0,
\end{cases}
\end{equation}
with $ p^{\varepsilon}=p^{(\varepsilon)}(\rho^{\varepsilon})=\frac{\tilde{p}(\rho^{\varepsilon})-\tilde{p}(1)}{\varepsilon^2}$. And, conservative force $F$ could be written as $F=\nabla \phi$. The Mach number is defined as $M^\varepsilon=\frac{|u^\varepsilon|}{\sqrt{(p^{(\varepsilon)})'(\rho^\varepsilon)}}=\frac{\varepsilon|u^\varepsilon|}{\sqrt{\tilde{p}'(\rho^\varepsilon)}}$.

By \eqref{boundarycondition}--\eqref{rhoinftycondtion}, the flow satisfies 
the slip boundary
condition that
\begin{equation}\label{bc-1}
u^\varepsilon   \cdot \vec{n}=0\ \ \mbox{on}\ \ \Gamma,
\end{equation}
where $\vec{n}$ denotes the unit inward normal of domain
$\mathcal{D}(\Gamma)$, and the asymptotic behavior as $|x| \rightarrow \infty$ that
\begin{equation}\label{infc-1}
u^{\varepsilon} \rightarrow(q_\infty, 0, \cdots, 0) \qquad \mbox{and}\qquad \rho^{\varepsilon} \rightarrow 1.
\end{equation}

By \eqref{CISEV}, we have the following Bernoulli law that
\begin{equation}
\nabla\left(\frac{|u^\varepsilon|^2}{2}+h^{(\varepsilon)}(\rho^\varepsilon)-\phi \right)=0,
\end{equation}
where the enthalpy $h^{(\varepsilon)}$ is defined by $h^{(\varepsilon)}(\rho)=\int_1^{\rho}\frac{(p^{(\varepsilon)})'(s)}{s} ds$ up to any constant.

Then by the asymptotic behavior \eqref{infc-1}, we have that
\begin{equation}
\frac{|u^\varepsilon|^2}{2}+h^{(\varepsilon)}(\rho^\varepsilon)
=\frac{q_\infty^2}{2}+h^{(\varepsilon)}(1)+\phi,
\end{equation}

Finally, let 
$$
\tilde{h}=\varepsilon^2 h^{(\varepsilon)}
$$  
with 
$$
\tilde{h}'(\rho)=\frac{\tilde{p}'(\rho)}{\rho}.
$$ 

By \eqref{conditiononpressure}, we can see that $\tilde{h}(\rho)$ is a strictly increasing function with respect to $\rho$, so does $h^{(\varepsilon)}(\rho^\varepsilon)$.  
Let 
$$
\tilde{H}(\rho):=\frac{\tilde{p}'(\rho)}{2}+\tilde{h}(\rho).
$$

Now, we can introduce the critical density 
$\rho^\varepsilon_{cr}$ for each fixed $\varepsilon$ such that
\begin{equation}\label{2.17}
\frac{1}{\varepsilon^2}\tilde{H}(\rho^\varepsilon_{cr})=\frac{q_\infty^2}{2}+\frac{1}{\varepsilon^2}\tilde{h}(1)+\phi.
\end{equation}

In this paper, we assume $\phi\in L^{\infty}$. So there exists a constant $\phi^*>0$ such that $\phi$ is bounded as 
\begin{equation}\label{2.18}
|\phi|\leq\phi^\star.
\end{equation}
Note that 
$$
\lim_{\rho\rightarrow+\infty}\tilde{H}(\rho)-\tilde{h}(1)>0>\lim_{\rho\rightarrow0+}\tilde{H}(\rho)-\tilde{h}(1).
$$ 
Then, there exists an $\varepsilon_0>0$, such that when $\varepsilon_0\geq\varepsilon\geq0$, it holds \begin{equation}\label{psicondition}
\frac{1}{\varepsilon^2}\left(\lim_{\rho\rightarrow+\infty}\tilde{H}(\rho)-\tilde{h}(1)\right)-\frac{q_\infty^2}{2}>\phi^*>\phi>-\phi^*>\frac{1}{\varepsilon^2}\left(\lim_{\rho\rightarrow0^+}\tilde{H}(\rho)-\tilde{h}(1)\right)-\frac{q_\infty^2}{2}.\end{equation}

So \eqref{2.17} is solvable, and
\begin{equation}\label{2.19}
\rho^\varepsilon_{cr}(\phi)=\tilde{H}^{-1}\left(\tilde{h}(1)+\frac{\varepsilon^2 q_\infty^2}{2}+\varepsilon^2\phi\right)
\end{equation}
and the critical speed is 
\begin{equation}
q^\varepsilon_{cr}(\phi)=\frac{1}{\varepsilon}\sqrt{\tilde{p}'\circ\tilde{H}^{-1}\left(\tilde{h}(1)+\frac{\varepsilon^2 q_\infty^2}{2}+\varepsilon^2\phi\right)}.
\end{equation}

It is easy to see that $\rho^\varepsilon_{cr}(\phi)\rightarrow\tilde{H}^{-1}(\tilde{h}(1))$ and $\varepsilon q^\varepsilon_{cr}(\phi) \rightarrow \sqrt{\tilde{p}'\circ\tilde{H}^{-1}(\tilde{h}(1))}$, so $q^\varepsilon_{cr}(\phi)$ will go to the infinity as $\varepsilon$ goes to zero. 

It is easy to see that $|u^\varepsilon|< q^\varepsilon_{cr}(\phi)$ holds if and only if the flow is subsonic, \textit{i.e.}, $M^\varepsilon(\phi) < 1$. Similarly,  
for each $\theta\in(0,1)$, there exists $q^\varepsilon_{\theta}(\phi)$ such that $|u^\varepsilon|\leq q^\varepsilon_{\theta}(\phi)$ holds if and only if $M^\varepsilon(\phi)\leq \theta$. Moreover, $q^{\varepsilon}_{\theta}(\phi)$ is monotonically increasing with respect to $\theta\in(0,1)$. 
For fixed $\theta\in(0,1)$, $q^{\varepsilon}_{\theta}(\phi)\rightarrow\infty$ as $\varepsilon\rightarrow 0$. Also, for $\varepsilon>0$, both $\varepsilon q^\varepsilon_{cr}(\phi)$ and  $\varepsilon q^\varepsilon_{\theta}(\phi)$ are uniformly bounded respect to $\varepsilon$. Moreover, when $M^{\varepsilon}(\phi)<1$, density $\rho^{\varepsilon}$ can be represented as the function of   $|u^\varepsilon|^2$ and $\phi$, \textit{i.e.},
\begin{eqnarray}
\rho^\varepsilon=\rho^\varepsilon(|u^\varepsilon|^2,  \phi)&=&\left(h^{(\varepsilon)}\right)^{-1}\left(\frac{q^2_\infty-|u^\varepsilon|^2}{2}+h^{(\varepsilon)}(1)+\phi\right)\nonumber\\
&=&\tilde{h}^{-1}\left(\frac{\varepsilon^2\left(q^2_\infty-|u^\varepsilon|^2\right)}{2}+\tilde{h}(1)+\varepsilon^2\phi\right).\label{2.21}
\end{eqnarray}

 By \eqref{2.19}, for a given fixed $0<\varepsilon<1$, the flow is subsonic if and only if it holds that
\begin{equation}
\tilde{H}^{-1}\left(\tilde{h}(1)+\frac{\varepsilon^2 q_\infty^2}{2}+\varepsilon^2\phi\right)=\rho^\varepsilon_{cr} <\rho^\varepsilon(|u^\varepsilon|^2, \phi).
\end{equation}
Moreover, by \eqref{2.21},
\begin{equation*}
\tilde{H}^{-1}\left(\tilde{h}(1)-\phi^\star\right)<\rho^\varepsilon(|u^\varepsilon|^2, \phi)\leq\tilde{h}^{-1}\left(\frac{\varepsilon^2 q^2_\infty}{2}+\tilde{h}(1)+\varepsilon^2\phi\right)<\tilde{h}^{-1}\left(\frac{ q^2_\infty}{2}+\tilde{h}(1)+\phi^\star\right).
\end{equation*}

The low Mach number limit is the limit process when $\varepsilon\rightarrow0$, that we expect the compressible Euler flow will converges to the corresponding incompressible Euler flow.
More precisely, we have the following result which is the main theorem of the paper.

\begin{theorem}\label{MainT}
	For a given $L^{\infty}$ function $\phi$ which satisfies
	\begin{equation}\label{psicondition1}
	\phi\in  L^{2}(\Omega)~ ~ \mbox{ and } ~ ~ (1+|x|^\beta)\nabla\phi\in L^q(\Omega) ~ \mbox{ for } ~ q>n , ~~ \beta>1-\frac{n}{q},
	\end{equation}
and for any fixed number $q_{\infty}$, there exists constants $\varepsilon_c>0$ and $\alpha\in(0,1)$, such that when $0<\varepsilon<\varepsilon_c$ there exists a unique solution $(\rho^\varepsilon, u^\varepsilon, p^\varepsilon)\in (C^{\alpha}(\Omega))^{n+2}$ of \textbf{Problem 1} corresponding to equations \eqref{CISEV} with $M^{\varepsilon}<1$. $M^{\varepsilon}$ varies on $(0,1)$ as $\varepsilon$ varies on $(0,\varepsilon_{c})$.  For any $0<\varepsilon<\varepsilon_c$, compressible-incompressible difference velocity $\tilde{u}^{(\varepsilon)}\in (C^{\alpha}(\Omega))^{n}$ satisfies:
$$
\left|\tilde{u}^{(\varepsilon)}\right|\leq
\frac{C}{(1+|x|)^{\beta'}},
$$
where $\beta'=\min\{\frac{n}{2}, \beta+\frac{n}{q}-1\}$. Furthermore, we have that, as $\varepsilon\rightarrow0$ 
\begin{equation*}
\rho^{(\varepsilon)}=1+O(\varepsilon^2)
\qquad
u^{(\varepsilon)}=\bar{u}+\varepsilon^2\tilde{u}^{(\varepsilon)}
\qquad
\nabla p^{(\varepsilon)}=\nabla\bar{p}+O(\varepsilon^2),
\qquad
M^{\varepsilon}=O(\varepsilon)
\end{equation*}
where $(1, \bar{u}, \bar{p})$ is the classical solution of {\bf Problem 1} corresponding to equations \eqref{CHEB}.
\end{theorem}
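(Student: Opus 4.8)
The plan is to realize the three stages indicated for Sections~3 and~4 — a variational reformulation, a uniform-in-$\varepsilon$ energy estimate through a difference functional, and a regularity lifting — and then to read off the convergence rates from the Bernoulli law.

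First I would reformulate the compressible problem variationally. Since $\mathrm{curl}\,u^\varepsilon=0$, write $u^\varepsilon=\nabla\varphi^\varepsilon$ and split $\varphi^\varepsilon=\psi^\varepsilon+q_\infty x_1$ to absorb the nonzero far-field value in \eqref{infc-1}, so that $\nabla\psi^\varepsilon\to0$ and one may seek $\psi^\varepsilon\in\mathcal V$. By \eqref{2.21} the density is a function $\rho^\varepsilon=\rho^\varepsilon(|\nabla\varphi^\varepsilon|^2,\phi)$, and the continuity equation $\mathrm{div}(\rho^\varepsilon\nabla\varphi^\varepsilon)=0$ together with the Neumann condition $\partial_{\mathbf n}\psi^\varepsilon=-q_\infty n_1$ is exactly the Euler--Lagrange equation of a functional $I^\varepsilon[\psi]$ whose integrand is (half) the primitive of $\rho^\varepsilon$ in the variable $|\nabla\varphi|^2$, minus the boundary term $q_\infty\int_\Gamma n_1\psi\,dS$ as in the incompressible case. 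The subsonic constraint $M^\varepsilon<1$, together with \eqref{conditiononpressure}, is precisely what makes this integrand strictly convex in $\nabla\varphi$, so $I^\varepsilon$ is strictly convex and, using Theorem~\ref{Hardy} to control the boundary term, coercive on $\mathcal V$; this yields a unique minimizer and hence, after verifying the subsonic bound, a unique solution of \eqref{CISEV}.

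The crux is $\varepsilon$-uniform control, and I expect this to be the main obstacle. Following the strategy announced in the introduction, I would introduce the compressible--incompressible difference potential $\tilde\psi^{(\varepsilon)}:=\varepsilon^{-2}(\psi^\varepsilon-\bar\psi)$, so that $\tilde u^{(\varepsilon)}=\nabla\tilde\psi^{(\varepsilon)}$ is consistent with the ansatz $u^\varepsilon=\bar u+\varepsilon^2\tilde u^{(\varepsilon)}$, and show that $\tilde\psi^{(\varepsilon)}$ itself minimizes a functional obtained by subtracting the incompressible Dirichlet energy from $I^\varepsilon$ and rescaling by $\varepsilon^{-2}$. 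Because both $\psi^\varepsilon$ and $\bar\psi$ minimize convex functionals with the \emph{same} boundary data, the difference functional is convex, and the delicate point is to show that its coercivity constant does not degenerate as $\varepsilon\to0$ in spite of the singular pressure $p^\varepsilon=\varepsilon^{-2}(\tilde p(\rho^\varepsilon)-\tilde p(1))$. This is done by expanding $\rho^\varepsilon(|u^\varepsilon|^2,\phi)$ from \eqref{2.21} in powers of $\varepsilon^2$ along the Bernoulli relation, so that $\rho^\varepsilon-1=O(\varepsilon^2)$ \emph{uniformly}; this simultaneously furnishes the uniform bound $\|\nabla\tilde\psi^{(\varepsilon)}\|_{L^2(\Omega)}\le C$ and keeps the Mach number below $1$ for small $\varepsilon$, pinning down $\varepsilon_c$.

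Finally, I would upgrade regularity and extract the rates. From the uniform $L^2$ bound, De Giorgi--Nash--Moser and Schauder estimates for the uniformly elliptic (subsonic) quasilinear equation give $\tilde u^{(\varepsilon)}\in(C^\alpha(\Omega))^n$, with the force entering the right-hand side through $\nabla\phi$; hypothesis \eqref{psicondition1} is what supplies both the H\"older continuity and the far-field decay. For the asymptotic estimate $|\tilde u^{(\varepsilon)}|\le C(1+|x|)^{-\beta'}$ I would use a fundamental-solution representation and balance the two competing rates: the purely potential part decays like $(1+|x|)^{-n/2}$ as in \eqref{barpsiLinfty}, while the forced part inherits the weighted rate $\beta+\tfrac nq-1$ from $(1+|x|^\beta)\nabla\phi\in L^q$, and the slower of the two gives $\beta'=\min\{\tfrac n2,\beta+\tfrac nq-1\}$. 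The rates then follow: \eqref{2.21} gives $\rho^\varepsilon=\tilde h^{-1}(\tilde h(1)+O(\varepsilon^2))=1+O(\varepsilon^2)$ and hence $M^\varepsilon=\varepsilon|u^\varepsilon|/\sqrt{\tilde p'(\rho^\varepsilon)}=O(\varepsilon)$, while $u^\varepsilon=\bar u+\varepsilon^2\tilde u^{(\varepsilon)}$ holds by construction. For the pressure, since $\bar p$ is determined only up to a constant through $\nabla\bar p=\nabla(\phi-|\bar u|^2/2)$, I compare gradients: the compressible Bernoulli law gives $\nabla p^\varepsilon=\rho^\varepsilon\nabla(\phi-|u^\varepsilon|^2/2)$, and inserting the density and velocity expansions yields $\nabla p^\varepsilon=\nabla\bar p+O(\varepsilon^2)$. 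Uniqueness throughout is inherited from the strict convexity of $I^\varepsilon$.
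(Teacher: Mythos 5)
Your overall architecture coincides with the paper's: potential formulation, a compressible--incompressible difference functional rescaled by $\varepsilon^{-2}$ (so that $\tilde\varphi^{(\varepsilon)}=\varepsilon^{-2}(\varphi^{(\varepsilon)}-\bar\varphi)$ is itself the minimizer of a convex, coercive functional on $\mathcal V$ with an $\varepsilon$-independent energy bound), De Giorgi--Nash regularity with $\nabla\phi\in L^q$, $q>n$, entering as the inhomogeneity, a scaling argument for the far-field decay, and the expansion $\rho^\varepsilon=\tilde h^{-1}(\tilde h(1)+O(\varepsilon^2))=1+O(\varepsilon^2)$ for the rates. The differences in the last stage are minor: you extract $\nabla p^\varepsilon=\nabla\bar p+O(\varepsilon^2)$ from the Bernoulli law, whereas the paper takes the divergence of $\bar u\otimes\bar u-\rho^\varepsilon u^\varepsilon\otimes u^\varepsilon$ and concludes in the weak sense; and you invoke a fundamental-solution representation for the decay where the paper rescales the equation for $\partial_k\tilde\varphi^{(\varepsilon)}$ on annuli $\{R/2<|x|<2R\}$ and applies the interior estimate there --- the latter is the more robust route for a variable-coefficient divergence-form equation, but your balancing of the rates $n/2$ and $\beta+\frac nq-1$ is the correct mechanism.

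There is, however, one genuine gap: you apply the direct method to the untruncated functional and propose to ``verify the subsonic bound'' only after obtaining the minimizer. This does not work. The integrand $G^{(\varepsilon)}(|\nabla\varphi|^2,\phi)=\frac12\int_0^{|\nabla\varphi|^2}\rho^\varepsilon(\lambda,\phi)\,d\lambda$ is strictly convex in $\nabla\varphi$ only on the subsonic range $|\nabla\varphi|<q^\varepsilon_{cr}(\phi)$ (equivalently, $\sum\hat a_{ij}\xi_i\xi_j>0$ fails at and beyond the sonic speed, by the second condition in \eqref{conditiononpressure}); the density $\rho^\varepsilon(|\nabla\varphi|^2,\phi)$ is not even defined past the vacuum speed. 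Since competitors $\tilde\varphi\in\mathcal V$ are in no way constrained to keep $|\nabla\bar\varphi+\varepsilon^2\nabla\tilde\varphi|$ subsonic, your functional is neither convex nor well defined on the whole admissible class, and existence, uniqueness and lower semicontinuity all break down at the first step. This is precisely why the paper introduces the phase-plane cut-off $\hat q(q^2,\phi)$ and the modified density $\hat\rho^\varepsilon$ (\textbf{Problem 3}), making the coefficients uniformly elliptic for \emph{all} gradient values with constants $\hat\lambda_1,\hat\lambda_2$ independent of $\varepsilon$, and only afterwards removes the truncation via the a priori bound $|\nabla\varphi^{(\varepsilon)}|\le\max|\nabla\bar\varphi|+C\varepsilon^2<\mathring q^{\varepsilon_0}_\theta(\phi)$ for $\varepsilon$ small (the Bers argument), which also pins down $\varepsilon_c$. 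A related, smaller omission: strict convexity of the (truncated) functional gives uniqueness of the \emph{minimizer}, but the theorem asserts uniqueness of the \emph{solution}; the paper needs a separate energy argument with cut-off test functions $\eta_R$ (Lemma \ref{6.1}), using the Hardy inequality \eqref{Hardyforv} and the decay \eqref{5.2} to justify the integration by parts on the unbounded domain. Your proposal should incorporate both the truncation--removal step and this uniqueness argument.
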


\begin{remark}
If $n=3$, it is easy to check the force potential generated by the solid domain $\Omega^c$ (which is the complement of the fluid domain $\Omega$):
	$$
	\phi(x)=\int_{\Omega^c}\frac{\rho_s(y)}{|x-y|} d y
	$$
satisfies the conditions \eqref{2.18} and \eqref{psicondition1} for $\phi$, where $x\in\Omega$ and $\rho_s\in L^1(\Omega^c)$ means the density distribution in $\Omega^c$ is of finite mass. As an example, $\phi$ can be the electric field.
\end{remark}

\begin{remark}
By \eqref{psicondition1} and the Gagliardo-Nirenberg interpolation inequality, we know that $\phi\in W^{1,q}(\Omega)$ if $n\geq4$ or $q>6$ if $n=3$. So $\phi$ is an $L^{\infty}$ function by the Morrey's  inequality. Hence condition \eqref{2.18}, and then condition \eqref{psicondition} follow.
\end{remark}

\begin{remark}
In Theorem \ref{MainT}, the regularity of $(\rho^{(\varepsilon)}, u^{(\varepsilon)}, p^{(\varepsilon)})$ are limited by $\phi$. One can improve the regularity by imposing higher regularity.
\end{remark}

\begin{remark}
The effect of conservative force $\nabla\phi$ is very clear here. Since $\bar{u}-q_\infty e_1$ have the $\frac{n}{2}$ decay rate to infinity, the first order of compressible part $\tilde{u}^{(\varepsilon)}$ may have slower convergence rate due to the decay of $\nabla\phi$.
\end{remark}

\subsection{Potential formulation of compressible flow }

Similar to the incompressible case, we can also introduce the velocity potential $\varphi^{(\varepsilon)}$ for the compressible case that 
\begin{equation}
\nabla \varphi ^{(\varepsilon)}= u^{\varepsilon},
\end{equation}
with the slip boundary
condition that
\begin{equation}\label{bc}
\frac{\partial \varphi^\varepsilon}{\partial  \textbf{n}}=0\ \ \mbox{on}\ \ \Gamma,
\end{equation}
where  \textbf{n} denotes the unit inward normal of domain
$\mathcal{D}(\Gamma)$, and the asymptotic behavior at the infinity that
\begin{equation}\label{infc}
\lim\limits_{|x| \rightarrow \infty}\nabla {\varphi}^{\varepsilon} =(q_\infty, 0, \cdots, 0).
\end{equation}

Therefore, {\bf Problem 1} can be reformulated into the following problem. 

\textbf{Problem 2}. Let $n\geq3$. Find function $\varphi^{(\varepsilon)}$ such that
\begin{equation}\label{ce1}
\begin{cases}
\mbox{div} \left(\rho^\varepsilon(|\nabla\varphi^{(\varepsilon)}|^2, \phi)\nabla\varphi^{(\varepsilon)} \right)=0, \quad &x\in\Omega,\\
\frac{\partial\varphi^{(\varepsilon)}}{\partial \textbf{n}}=0, \quad &x\in\Gamma,\\
\lim\limits_{|x| \rightarrow \infty}\nabla \varphi^{(\varepsilon)} =(q_\infty, 0, \cdots, 0).
\end{cases}
\end{equation}

By the straightforward computation, $\eqref{ce1}_1$ can be rewritten as
\begin{equation}\label{comell}
\sum_{ij=1}^n a_{ij}^{\varepsilon}\partial_{ij}\varphi^{(\varepsilon)}+\sum_{i=1}^n b^\varepsilon_i\partial_i\varphi^{(\varepsilon)}=0,
\end{equation}  
where
\begin{equation}
a^\varepsilon_{ij}
=\rho^{\varepsilon}\left(\delta_{ij}-\frac{\varepsilon^2\partial_i\varphi^{(\varepsilon)}\partial_j\varphi^{(\varepsilon)}}
{\tilde{p}'(\rho^\varepsilon)}\right),
\end{equation}
and
\begin{equation}
b^\varepsilon_i=\frac{\varepsilon^2\rho^\varepsilon\partial_i\phi}{\tilde{p}'(\rho^\varepsilon)}.
\end{equation}
For $0<\varepsilon<1$ and $M^\varepsilon<\theta<1$, we have that 
\begin{equation}
0<\lambda_1|\xi|^2\leq\sum_{ij=1}^n a^\varepsilon_{ij}\xi_i\xi_j\leq \lambda_2|\xi|^2,
\end{equation}
where constants $\lambda_1$ and $\lambda_2$ do not depend on 
$\varepsilon$.


\section{Variational Approach of the Compressible airfoil problem}
To show Theorem \ref{MainT}, we should first show the existence of smooth solution of the compressible airfoil problem. To show it, in this section, we will first introduce a variational formulation, and then show the existence of the minimizer which is a solution of the compressible airfoil problem. 
 
\subsection{A variational formulation for the compressible flow}
Notice that equation \eqref{comell} is nonlinear and is uniformly elliptic if and only if $M^\varepsilon<\theta<1$. 
Therefore we need to introduce the cut-off to truncate the coefficients of equation \eqref{comell}. For $0<\varepsilon_0<1$, $0<\theta<1$, we introduce $\mathring{q}^{\varepsilon_0}_\theta(\phi)=\inf_{0<\varepsilon<\varepsilon_0} q^\varepsilon_\theta(\phi)$, and cut-off function on the phase plan
\begin{equation*}
\hat{q}(q^2, \phi)=\begin{cases}
q^2-2\phi\quad
&\mbox{if}~|q|\leq \mathring{q}^{\varepsilon_0}_\theta(\phi),\\
\mbox{monotone smooth function} \quad
&\mbox{if} 
~\mathring{q}^{\varepsilon_0}_\theta(\phi)   \leq|q|\leq \mathring{q}^{\varepsilon_0}_{\frac{\theta+1}{2}}(\phi),\\
\sup_{x\in\Omega}\left(\left(\mathring{q}^{\varepsilon_0}_{\frac{\theta+1}{2}}\right)^2(\phi)-2\phi\right)(x)\quad
&\mbox{if}~|q|\geq \mathring{q}^{\varepsilon_0}_{\frac{\theta+1}{2}}(\phi),
\end{cases}\end{equation*}

Let $\hat{\rho}^{(\varepsilon)}$ satisfy
\begin{equation}\label{3.1}
\frac{\hat{q}(q^2, \phi)}{2}+h^{(\varepsilon)}(\hat{\rho}^{(\varepsilon)})
=\frac{q_\infty^2}{2}+h^{(\varepsilon)}(1),
\end{equation}
which is equivalent to that
\begin{equation}
\hat{\rho}^\varepsilon=\hat{\rho}^\varepsilon(q^2, \phi)
=\tilde{h}^{-1}\left(\frac{\varepsilon^2\left(q^2_\infty-\hat{q}(q^2, \phi)\right)}{2}+\tilde{h}(1)\right).
\end{equation}


We remark that in this paper, we will construct solutions that satisfy 
$|\nabla\bar{\varphi}|\leq \mathring{q}^{\varepsilon_0}_\theta$, which implies $\hat{q}(|\nabla\bar{\varphi}|^2)=|\nabla\bar{\varphi}|^2$. Moreover, define 
$
\mathring{q}^{\varepsilon_0}_{cr}=\inf_{0<\varepsilon<\varepsilon_0} q^\varepsilon_{cr}.
$
And, we denote $\hat{\rho}^\varepsilon_\Lambda(\Lambda, \phi):=\frac{\partial}{\partial \Lambda}\hat{\rho}^\varepsilon(\Lambda, \phi)$, and $\hat{\rho}^\varepsilon_\phi(\Lambda, \phi):=\frac{\partial}{\partial \phi}\hat{\rho}^\varepsilon(\Lambda, \phi)$.

Then, \textbf{Problem 2} is reformulated into \textbf{Problem 3} as follows.

\textbf{Problem 3}:
Let $n\geq3$. Find function $\varphi^{(\varepsilon)}$ which satisfies
\begin{equation}\label{equ-modify}
\begin{cases}
\mbox{div} \left(\hat{\rho}^\varepsilon(|\nabla\varphi^{(\varepsilon)}|^2, \phi)\nabla\varphi^{(\varepsilon)}\right)=0, \quad &x\in\Omega,\\
\frac{\partial\varphi^{(\varepsilon)}}{\partial \textbf{n}}=0, \quad &x\in\Gamma,\\
\lim\limits_{|x| \rightarrow \infty}\nabla \varphi^{(\varepsilon)}=(q_\infty, 0, \cdots, 0).
\end{cases}
\end{equation}

Straightforward calculation yields that  $\eqref{equ-modify}_1$ can be rewritten as
$$
\sum_{i,j=1}^n\hat{a}_{ij}(\nabla \varphi^{(\varepsilon)}, \phi)\partial_{ij}\varphi^{(\varepsilon)}+\sum_{i=1}^n\hat{b}_i(\nabla\varphi^{(\varepsilon)}, \phi)\partial_i\varphi^{(\varepsilon)}=0,
$$ 
where
\begin{eqnarray}\label{li-3.2}
\hat{a}_{ij}\left(\nabla \varphi^{(\varepsilon)}, \phi\right) &=& \hat{\rho}^{\varepsilon}\left(|\nabla\varphi^{(\varepsilon)}|^2, \phi\right)\left(\delta_{ij} -
\frac{\hat{q}_\Lambda(|\nabla\varphi^{(\varepsilon)}|^2, \phi)\partial_i\varphi^{(\varepsilon)}\partial_j\varphi^{(\varepsilon)}}
{(c^\varepsilon)^2}\right)\nonumber\\
&=&\hat{\rho}^{\varepsilon}\left(|\nabla\varphi^{(\varepsilon)}|^2\right)\left(\delta_{ij} - 
\frac{\varepsilon^{2}\hat{q}_\Lambda(|\nabla\varphi^{(\varepsilon)}|^2, \phi)\partial_i\varphi^{(\varepsilon)}\partial_j\varphi^{(\varepsilon)}}
{\tilde{p}'(\hat{\rho}^{\varepsilon})}\right),
\end{eqnarray}
and
\begin{equation}
\hat{b}_i\left(\nabla \varphi^{(\varepsilon)}, \phi\right)=\frac{\varepsilon^2\hat{\rho}^\varepsilon\hat{q}_\phi(|\nabla\varphi^{(\varepsilon)}|^2, \phi)\partial_i\phi}{\tilde{p}'(\hat{\rho}^\varepsilon)}
\end{equation}

Moreover, 
\begin{equation}\label{li-3.3}
\hat{\lambda}_1 |\xi|^2 \leq\sum_{i, j =1}^n \hat{a}_{ij} \left(\nabla\varphi^{(\varepsilon)}, \phi\right)\xi_i\xi_j\leq \hat{\lambda}_2 |\xi|^2, \qquad |\hat{b}_i \partial_i\varphi^{(\varepsilon)}|\leq C|\partial_{i}\phi|,
\end{equation}
where constants $C$, $\hat{\lambda}_1$, and $\hat{\lambda}_2$ depend only on the subsonic truncation parameters $\theta$ and $\varepsilon_0$, and do not depend on solution $\varphi^{(\varepsilon)}$. 

Finally, we remark that a solution of \textbf{Problem 3}, where the density $\hat{\rho}^{(\varepsilon)}$ is derived from the new density-speed relation \eqref{3.1}, is also a solution of the original potential flow equation in the  \textbf{Problem 2} when $|\nabla\varphi^{(\varepsilon)}|<\mathring{q}^{\varepsilon_0}_\theta$.

To solve \textbf{Problem 3}, we follow the idea used in \cite{Dong2} to introduce a variational formulation. Denote
$$
G^{(\varepsilon)}(\Lambda, \phi) = \frac{1}{2}\int_0^\Lambda \hat{\rho}^{(\varepsilon)}(\lambda, \phi)d\lambda.
$$

Formally, \eqref{equ-modify} is the Euler-Lagrangian equation of the variational problem with respect to the integral that 
$$
\int_\Omega G^{(\varepsilon)}\left(|\nabla\varphi^{(\varepsilon)}|^2, \phi\right)dx.
$$

However, the integral above is infinite due to the unbounded domain $\Omega$. To overcome it,
we introduce 
\begin{equation}\label{li-3.1}
I^{(\varepsilon)}\left(\varphi, \bar{\varphi} \right) 
=\varepsilon^{-4}\int_\Omega 
\left[G^{(\varepsilon)}\left(|\nabla\varphi|^2, \phi\right)
-G^{(\varepsilon)}\left(|\nabla\bar{\varphi}|^2, \phi\right)
-\nabla\bar{\varphi}\cdot\left(\nabla\varphi-\nabla\bar{\varphi}\right)
\right]dx.
\end{equation}


Then \textbf{Problem 3} becomes: 

\textbf{Problem 4 }: Find a minimizer $\tilde{\varphi}^{(\varepsilon)}\in\mathcal{V}$  such that
\begin{equation}
I^{(\varepsilon)}\left(\bar{\varphi}+\varepsilon^2\tilde{\varphi}^{(\varepsilon)}, \bar{\varphi}\right)
=\min_{\tilde{\varphi}\in \mathcal{V}} I^{(\varepsilon)}(\bar{\varphi}+\varepsilon^{2}\tilde{\varphi}, \bar{\varphi}).
\end{equation}

Here, $\tilde{\varphi}=\frac{\varphi-\bar{\varphi}}{\varepsilon^2}$ and $\tilde{\varphi}^{(\varepsilon)}=\frac{\varphi^{(\varepsilon)}-\bar{\varphi}}{\varepsilon^2}$.


\begin{remark}\label{rem:3.1}
The minimizer of \textbf{Problem 4} satisfies \eqref{equ-modify}.
\end{remark}
\begin{proof}
Let
$\eta\in C_c^\infty({\R}^n)$.
The first variation of $I^{(\varepsilon)}\left(\varphi^{(\varepsilon)}, \bar{\varphi}\right)$ with
$\eta$ is
\begin{equation}
0=\int_\Omega\left[2G^{(\varepsilon)}_\Lambda \left(|\nabla\varphi^{(\varepsilon)} |^2, \phi\right)\nabla\varphi^{(\varepsilon)}\cdot\nabla\eta-\nabla\bar{\varphi}\cdot\nabla\eta\right]dx
\end{equation}
The second term in the integrand above vanishes due to the definition of the incompressible flow $\bar{\varphi}$. Then, the first variation of
$I^{(\varepsilon)}\left(\varphi^{(\varepsilon)}, \bar{\varphi}\right)$ 
associated with $\eta$ is
\begin{eqnarray}
0&=&\int_\Omega 2  G^{(\varepsilon)}_\Lambda \left(|\nabla\varphi^{(\varepsilon)}|^2, \phi\right)\nabla\varphi^{(\varepsilon)}\cdot\nabla\eta dx\nonumber\\
&=&\int_\Omega \tilde{\rho}^{(\varepsilon)} \left(|\nabla\varphi^{(\varepsilon)}|^2, \phi\right)\nabla\varphi^{(\varepsilon)}\cdot\nabla\eta dx\nonumber
\end{eqnarray}
Hence this remark follows from the identity above.
\end{proof}

\subsection{Unique existence of the minimizer of Problem 4}
First, for the existence of a minimizer of \textbf{Problem 4}, we have the following theorem. 
\begin{theorem}\label{th3} \textbf{Problem 4} admits a unique minimizer $\tilde{\varphi}^{(\varepsilon)}\in\mathcal{V}$, which satisfies that 
	\begin{equation}
	\label{ff}
	\int_{\Omega}|\nabla\tilde{\varphi}^{(\varepsilon)}|^2\, dx \leq C,
	\end{equation}
	where
constant $C$ depends only on $\bar{\varphi}$ and $\Omega$ and does not depend on $\varepsilon$.
\end{theorem}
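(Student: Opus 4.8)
The plan is to establish existence of a minimizer for $I^{(\varepsilon)}$ by the direct method of the calculus of variations, then verify uniqueness via convexity, and finally extract the uniform $L^2$ bound \eqref{ff} from the coercivity of the functional. The central structural fact I would exploit is that the truncated density $\hat\rho^{(\varepsilon)}$ has been engineered (through the cut-off $\hat q$) so that the integrand $G^{(\varepsilon)}(|\nabla\varphi|^2,\phi)$ is a smooth, strictly convex function of $\nabla\varphi$ with uniformly bounded second derivatives — this is exactly the content of the ellipticity estimate \eqref{li-3.3}, where $\hat\lambda_1,\hat\lambda_2$ do not depend on $\varepsilon$.

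\textbf{Step 1: Convexity and coercivity.} First I would show that the map $p\mapsto G^{(\varepsilon)}(|p|^2,\phi)$ is uniformly convex in $p$, with Hessian bounds inherited from \eqref{li-3.3}. Writing $\varphi=\bar\varphi+\varepsilon^2\tilde\varphi$, a second-order Taylor expansion of $G^{(\varepsilon)}(|\nabla\varphi|^2,\phi)$ around $\nabla\bar\varphi$ shows that the bracketed integrand in \eqref{li-3.1} — which is precisely the Taylor remainder after subtracting the value and the linear term at $\bar\varphi$ — is comparable to $\varepsilon^4|\nabla\tilde\varphi|^2$. Concretely, I expect
\[
\hat\lambda_1\,\varepsilon^4|\nabla\tilde\varphi|^2 \;\lesssim\; G^{(\varepsilon)}(|\nabla\varphi|^2,\phi)-G^{(\varepsilon)}(|\nabla\bar\varphi|^2,\phi)-\nabla\bar\varphi\cdot(\nabla\varphi-\nabla\bar\varphi) \;\lesssim\; \hat\lambda_2\,\varepsilon^4|\nabla\tilde\varphi|^2,
\]
so that after multiplying by $\varepsilon^{-4}$ the functional satisfies $I^{(\varepsilon)}(\bar\varphi+\varepsilon^2\tilde\varphi,\bar\varphi)\approx\int_\Omega|\nabla\tilde\varphi|^2\,dx$, uniformly in $\varepsilon$. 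This simultaneously delivers coercivity on $\mathcal V$ (whose norm is exactly $(\int_\Omega|\nabla\cdot|^2)^{1/2}$) and the uniform bound \eqref{ff}, since the minimum value is controlled by testing with $\tilde\varphi=0$, giving $I^{(\varepsilon)}\le 0$ up to the lower-order forcing contributions.

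\textbf{Step 2: Existence and uniqueness.} With coercivity and weak lower semicontinuity (the latter following from convexity of the integrand in $\nabla\tilde\varphi$), I would take a minimizing sequence $\{\tilde\varphi_k\}\subset\mathcal V$, use the uniform gradient bound to extract a weakly convergent subsequence in the Hilbert space $\mathcal V$, and pass to the limit by lower semicontinuity to obtain a minimizer $\tilde\varphi^{(\varepsilon)}$. The Hardy inequality of Theorem \ref{Hardy} is what makes $\mathcal V$ a genuine Hilbert space with bounded sequences admitting weak limits that remain controlled pointwise in the weighted $L^2$ sense. Uniqueness follows from strict convexity: if two minimizers existed, their midpoint would strictly lower the functional unless their gradients agreed a.e., forcing them to coincide in $\mathcal V$.

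\textbf{The main obstacle} I anticipate is making the lower bound of Step 1 rigorous on the unbounded exterior domain while keeping all constants independent of $\varepsilon$. The subtlety is that the Taylor remainder estimate uses the Hessian $\hat a_{ij}$ evaluated at intermediate gradients $\nabla\bar\varphi+s\varepsilon^2\nabla\tilde\varphi$, and I must ensure these intermediate states stay within the range where the uniform ellipticity \eqref{li-3.3} holds — which is guaranteed precisely because the cut-off $\hat q$ freezes the coefficients outside the subsonic region, so \eqref{li-3.3} holds for \emph{all} gradient values, not merely small ones. A secondary technical point is handling the linear forcing term carefully: because the functional is shifted by the incompressible solution $\bar\varphi$, the first-variation computation in Remark \ref{rem:3.1} shows the $\nabla\bar\varphi\cdot\nabla\eta$ contribution integrates against $\Delta\bar\varphi=0$, so the forcing is effectively absorbed and does not spoil coercivity; I would verify that the resulting linear functional on $\mathcal V$ is bounded, again via Hardy's inequality applied to the $\phi$-dependent terms, using the decay hypotheses \eqref{psicondition1} on $\phi$.
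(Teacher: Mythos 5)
Your overall architecture (direct method, uniform convexity inherited from the elliptic cut--off, coercivity plus comparison with $\tilde{\varphi}=0$ to get \eqref{ff}) is the same as the paper's, but the central estimate of your Step~1 contains a genuine error. The bracketed integrand in \eqref{li-3.1} is \emph{not} the second--order Taylor remainder of $F^{(\varepsilon)}(p)=G^{(\varepsilon)}(|p|^2,\phi)$ at $p=\nabla\bar{\varphi}$: the true linear term is $\nabla_pF^{(\varepsilon)}(\nabla\bar{\varphi})\cdot(\nabla\varphi-\nabla\bar{\varphi})=\hat{\rho}^{(\varepsilon)}(|\nabla\bar{\varphi}|^2,\phi)\,\nabla\bar{\varphi}\cdot(\nabla\varphi-\nabla\bar{\varphi})$, whereas \eqref{li-3.1} subtracts $\nabla\bar{\varphi}\cdot(\nabla\varphi-\nabla\bar{\varphi})$ with coefficient $1$. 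The discrepancy
\begin{equation*}
\varepsilon^{-4}\left(\hat{\rho}^{(\varepsilon)}(|\nabla\bar{\varphi}|^2,\phi)-1\right)\nabla\bar{\varphi}\cdot(\nabla\varphi-\nabla\bar{\varphi})
=\varepsilon^{-2}\left(\hat{\rho}^{(\varepsilon)}(|\nabla\bar{\varphi}|^2,\phi)-1\right)\nabla\bar{\varphi}\cdot\nabla\tilde{\varphi}
\end{equation*}
is an $O(1)$ term that is \emph{linear} in $\nabla\tilde{\varphi}$, so your displayed two--sided pointwise bound $\hat\lambda_1\varepsilon^4|\nabla\tilde{\varphi}|^2\lesssim[\cdots]\lesssim\hat\lambda_2\varepsilon^4|\nabla\tilde{\varphi}|^2$ is false (the integrand can be negative where the linear piece dominates), and $I^{(\varepsilon)}$ is not comparable to $\int_\Omega|\nabla\tilde{\varphi}|^2dx$ without additive constants. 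Moreover, your proposed mechanism for absorbing the linear term --- that $\nabla\bar{\varphi}\cdot\nabla\eta$ integrates to zero because $\Delta\bar{\varphi}=0$ --- addresses the wrong term: the leftover is weighted by $\hat{\rho}^{(\varepsilon)}-1$, and $(\hat{\rho}^{(\varepsilon)}-1)\nabla\bar{\varphi}$ is not divergence--free; in any case $\nabla\bar{\varphi}\cdot\nabla\tilde{\varphi}$ need not even be integrable over the unbounded $\Omega$ for a general element of $\mathcal{V}$.

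The paper's proof supplies exactly the missing ingredient: it splits $I^{(\varepsilon)}=I_1^{(\varepsilon)}+I_2^{(\varepsilon)}$, where $I_1^{(\varepsilon)}$ carries the correct Taylor linear term $2G^{(\varepsilon)}_\Lambda(|\nabla\bar{\varphi}|^2,\phi)\nabla\bar{\varphi}\cdot(\nabla\varphi-\nabla\bar{\varphi})$ and hence genuinely satisfies your two--sided bound by \eqref{li-3.3}, while $I_2^{(\varepsilon)}$ collects the linear correction. The key computation is that $\varepsilon^{-2}\bigl(\hat{\rho}^{(\varepsilon)}(|\nabla\bar{\varphi}|^2,\phi)-1\bigr)$ is bounded, uniformly in $\varepsilon$, by $C\bigl(|\nabla\bar{\varphi}-q_\infty e_1|+|\phi|\bigr)$, which lies in $L^2(\Omega)$ by \eqref{barpsiL2} and the hypothesis $\phi\in L^2(\Omega)$ in \eqref{psicondition1}; Cauchy--Schwarz then absorbs $I_2^{(\varepsilon)}$ into $\frac{C_1}{4}\|\tilde{\varphi}\|_{\mathcal{V}}^2$ at the price of the $\varepsilon$--independent additive constants appearing in \eqref{I-phi}. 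Those constants are what make \eqref{ff} come out with a nonzero $C$ even though $I^{(\varepsilon)}(\bar{\varphi},\bar{\varphi})=0$. Your remaining steps (strict/uniform convexity for uniqueness, a compactness argument for existence --- the paper uses uniform convexity and strong continuity rather than weak lower semicontinuity, an immaterial difference) are fine once this coercivity estimate is repaired.
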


\begin{proof}
The proof is divided into four steps.

	\textbf{Step 1.} $I^{(\varepsilon)}(	\bar{\varphi}+\varepsilon^{2}\tilde{\varphi}, \bar{\varphi})=I^{(\varepsilon)}(\varphi, \bar{\varphi})$ is coercive with respect to $\tilde{\varphi}$ in $\mathcal{V}$, \textit{i.e.}, we will show that
\begin{equation}\label{I-phi}
	I^{(\varepsilon)}(\varphi, \bar{\varphi}) \geq \frac{C_1}{2}\int_{\Omega} |\nabla\tilde{\varphi}|^2dx - C_3\int_{\Omega} |\nabla\bar{\varphi}-e_1q_\infty|^2dx
-C\int_{\Omega} |\phi|^2dx.
\end{equation}
	
Let
	\begin{eqnarray}
	&&I^{(\varepsilon)}_1(\varphi, \bar{\varphi})\nonumber\\
	:&=&	\varepsilon^{-4}\int_\Omega 
	\left[G^{(\varepsilon)}\left(|\nabla\varphi|^2, \phi\right)
	-G^{(\varepsilon)}\left(|\nabla\bar{\varphi}|^2, \phi\right)
	-2 G^{(\varepsilon)}_\Lambda \left(|\nabla\bar{\varphi} |^2, \phi\right)\nabla\bar{\varphi}\cdot\left(\nabla\varphi-\nabla\bar{\varphi}\right)
	\right]dx,\nonumber
	\end{eqnarray}
	and
	\begin{equation}
	I^{(\varepsilon)}_2(\varphi, \bar{\varphi})
	:= \varepsilon^{-4}\int_\Omega 
	\left[
	 2\left(G^{(\varepsilon)}_\Lambda \left(|\nabla\bar{\varphi} |^2, \phi\right)-1\right)\nabla\bar{\varphi}\cdot\left(\nabla\varphi-\nabla\bar{\varphi}\right)
	\right]dx.
	\end{equation}
Obviously,
$	I^{(\varepsilon)}(\varphi, \bar{\varphi})=I^{(\varepsilon)}_1(\varphi, \bar{\varphi})+I^{(\varepsilon)}_2(\varphi, \bar{\varphi}).
$

First, we will show that $I_1^{(\varepsilon)}(\varphi,\bar{\varphi})$ is coercive with respect to $\tilde{\varphi}$ in $\mathcal{V}$.	
	
	We denote $p=(p_1, \cdots, p_n)$, $F^{(\varepsilon)}(p)=G^{(\varepsilon)}\left(|p|^2 , \phi\right)$. Then by direct computation, we can get that
	\begin{eqnarray}
	&&G^{(\varepsilon)}\left(|\nabla\varphi|^2, \phi\right)-G^{(\varepsilon)}\left(|\nabla\bar{\varphi}|^2, \phi\right)
	-2 G^{(\varepsilon)}_\Lambda\left(|\nabla\bar{\varphi}|^2, \phi\right) \nabla\bar{\varphi}\cdot\left(\nabla\varphi-\nabla\bar{\varphi}\right)\nonumber\\
	&=& F^{(\varepsilon)}(\nabla\varphi) - F^{(\varepsilon)}\left( \nabla\bar{\varphi}\right)
	-\nabla F^{(\varepsilon)} \left( \nabla\bar{\varphi}\right) \cdot \left(\nabla\varphi- \nabla\bar{\varphi}\right)\nonumber\\
	&=&\sum_{i, j=1}^n\int_0^1(1-t)\partial_{p_ip_j} F^{(\varepsilon)}\left(t\nabla\varphi+(1-t) \nabla\bar{\varphi}\right)dt
	\partial_i(\varphi-\bar{\varphi})\partial_j(\varphi-\bar{\varphi}).\nonumber
	\end{eqnarray}
	It is easy to check $\partial_{pp}^2F^{(\varepsilon)}$ is uniformly positive.  In fact, we have
	\begin{equation}
	\left(\partial_{pp}^2F^{(\varepsilon)}(p)\right)_{i,j}
	= \hat{a}_{ij}\left(\nabla \varphi^{(\varepsilon)}, \phi\right) .
	\end{equation}
	
	From property \eqref{li-3.3}, we get the uniformly positivity of $\partial_{pp}^2F^{(\varepsilon)}$. As a consequence, we have
	\begin{equation}\label{ineq-phi}
	\frac{C_1}{2}\int_{\Omega}|\nabla\tilde{\varphi}|^2dx\leq I^{(\varepsilon)}_1(\varphi, \bar{\varphi}) \leq
	\frac{\tilde{C}_1}{2}\int_{\Omega}|\nabla\tilde{\varphi}|^2dx.
	\end{equation}

Now, let us consider $I^{(\varepsilon)}_2(\varphi, \bar{\varphi})$. Note that 
	\begin{eqnarray}
	\left|\varepsilon^{-2}\left( 2 G^{(\varepsilon)}_\Lambda \left(|\nabla\bar{\varphi} |^2, \phi\right)-1\right)\right|	&=&\left|\varepsilon^{-2}\left( \hat{\rho}^{(\varepsilon)}  \left(|\nabla\bar{\varphi} |^2, \phi\right)-1\right)\right|	\nonumber\\
	&=&\left|\varepsilon^{-2}\left( 	\tilde{h}^{-1}\left(\frac{\varepsilon^2\left(q^2_\infty-\hat{q}(|\nabla\bar{\varphi}|^2, \phi)\right)}{2}+\tilde{h}(1)\right)-1\right)\right|\nonumber\\
	&=&\left|\varepsilon^{-2}\left( 	\tilde{h}^{-1}\left(\frac{\varepsilon^2\left(q^2_\infty-|\nabla\bar{\varphi}|^2+2\phi\right)}{2}+\tilde{h}(1)\right)-\tilde{h}^{-1}\left(\tilde{h}(1)\right)\right)\right|\nonumber\\
	&=& \left|\frac{q^2_\infty-|\nabla\bar{\varphi}|^2+2\phi}{2}\int_{0}^{1} 	\left( \tilde{h}^{-1} \right)'\left(t\frac{\varepsilon^2\left(q^2_\infty-|\nabla\bar{\varphi}|^2+2\phi\right)}{2}+\tilde{h}(1)\right) dt\right|
\nonumber\\
	&\leq&\left(|\nabla\bar{\varphi}-e_1q_\infty| \left|\frac{q_\infty+|\nabla\bar{\varphi}|}{2}\right|+|\phi|\right)\nonumber\\
	&&\left|\int_{0}^{1} 	\left( \tilde{h}^{-1} \right)'\left(t\frac{\varepsilon^2\left(q^2_\infty-|\nabla\bar{\varphi}|^2+2\phi\right)}{2}+\tilde{h}(1)\right) dt\right|
	.\nonumber
	\end{eqnarray}
	
Then	
		\begin{eqnarray}\label{i2}
	|I^{(\varepsilon)}_2(\varphi, \bar{\varphi})|
	&=&\left|	\varepsilon^{-4}\int_\Omega 
	\left[
	\left( 2G^{(\varepsilon)}_\Lambda \left(|\nabla\bar{\varphi} |^2, \phi\right)-1\right)\nabla\bar{\varphi}\cdot\left(\nabla\varphi-\nabla\bar{\varphi}\right)
	\right]dx\right|,\nonumber\\
	&\leq& C \varepsilon^{-2}\int_{\Omega} \left(|\nabla\bar{\varphi}-e_1q_\infty|+|\phi|\right) |\nabla(\varphi-\bar{\varphi})|
	dx\nonumber\\
	&\leq& C\int_{\Omega} |\nabla\bar{\varphi}-e_1q_\infty|^2dx+C\int_{\Omega} |\phi|^2dx+\frac{C_1}{4} \varepsilon^{-4}\int_{\Omega} |\nabla(\varphi-\bar{\varphi})|^2dx.
	\end{eqnarray}	
	
By \eqref{ineq-phi}, \eqref{i2}, and the definition of $\tilde{\varphi}$ together, we get
\eqref{I-phi},
which also implies that $I^{(\varepsilon)}(\varphi, \bar{\varphi})$ is bounded from below, \textit{i.e.}, there exists a constant $C>0$ depending only on the data such that
\begin{equation}
I^{(\varepsilon)}(\varphi,\bar{\varphi})\geq -C
\end{equation}
for all $\tilde{\varphi}\in\mathcal{V}$.

\textbf{Step 2}.
Similarly, by \eqref{ineq-phi} and \eqref{i2} together, we can also show the upper bound of $I^{(\varepsilon)}(\varphi, \bar{\varphi})$ as:
\begin{equation}\label{I-phi-1}
I^{(\varepsilon)}(\varphi, \bar{\varphi}) \leq \frac{C_1+2\tilde{C}_1}{2}\int_{\Omega} |\nabla\tilde{\varphi}|^2dx + C_3\int_{\Omega} |\nabla\bar{\varphi}-e_1q_\infty|^2dx+ C_4\int_{\Omega} |\phi|^2dx.
\end{equation}

It means that $I^{(\varepsilon)}(\varphi,\bar{\varphi})=I^{(\varepsilon)}(\bar{\varphi}+\varepsilon^2\tilde{\varphi},\bar{\varphi})$ is finite for any $\tilde{\varphi}\in\mathcal{V}$. 

\textbf{Step 3.} 	We will prove $I^{(\varepsilon)}(\varphi, \bar{\varphi})$ is uniformly convex in the space $\mathcal{V}$.

	Since $I^{(\varepsilon)}_2(\varphi, \bar{\varphi})$ is linear respect to $\tilde{\varphi}$. for any $\tilde{\varphi}_1, \tilde{\varphi}_2\in \mathcal{V}$, we have that
\begin{eqnarray}
&&I^{(\varepsilon)}\left(\varphi_1,  \bar{\varphi}\right) + I^{(\varepsilon)} \left(\varphi_2,  \bar{\varphi}\right)
-2I^{(\varepsilon)}\left(\frac{\varphi_1+\varphi_2}{2},  \bar{\varphi}\right)\nonumber\\
&=&I_1^{(\varepsilon)}\left(\varphi_1,  \bar{\varphi}\right) + I_1^{(\varepsilon)} \left(\varphi_2,  \bar{\varphi}\right)
-2I_1^{(\varepsilon)}\left(\frac{\varphi_1+\varphi_2}{2},  \bar{\varphi}\right)\nonumber\\
&=&\int_{\Omega}F(\nabla\varphi_1) + F(\nabla\varphi_2)-2F\left(\frac{\nabla\varphi_1+\nabla\varphi_2}{2}\right) dx\nonumber\\
&\geq& \frac{C_1}{2} \varepsilon^{-4}\|\varphi_1-\varphi_2\|_{\mathcal{V}}^2=\frac{C_1}{2} \|\tilde{\varphi}_1-\tilde{\varphi}_2\|_{\mathcal{V}}^2,\label{e:3.21}
\end{eqnarray}
which proves the uniform convexity of $I^{(\varepsilon)}$.
	
\textbf{Step 4.} We are ready to show the unique existence of minimizer $\tilde{\varphi}^{(\varepsilon)} \in\mathcal{V}$ of \textbf{Problem 4}, which satisfies \eqref{ff}. 	
	
Firstly, we show the continuity of $I^{(\varepsilon)}(\bar{\varphi}+\varepsilon^2\tilde{\varphi},  \bar{\varphi})$ with respect to $\tilde{\varphi}$ in $\mathcal{V}$. 
Taking $\tilde{\varphi}_1$ and $\tilde{\varphi}_2$ in $\mathcal{V}$, corresponding to $\varphi_1$ and $\varphi_2$ respectively, we have 
	\begin{eqnarray}
	I^{(\varepsilon)}(\varphi_1,  \bar{\varphi})-I^{(\varepsilon)}(\varphi_2,  \bar{\varphi})&=&\varepsilon^{-4}\int_{\Omega}\left[ \frac{1}{2}\int_{|\nabla\varphi_2|^2}^{|\nabla\varphi_1|^2}\hat{\rho}^{(\varepsilon)}(\Lambda, \phi)d\Lambda -\nabla\bar{\varphi}\cdot\nabla\left(\varphi_1-\varphi_2\right)\right] dx\nonumber\\
	&=&\varepsilon^{-4}\int_{\Omega}\left[ \frac{1}{2}\int_{|\nabla\varphi_2|^2}^{|\nabla\varphi_1|^2}\hat{\rho}^{(\varepsilon)}(\Lambda, \phi)d\Lambda -\hat{\rho}^{(\varepsilon)}(|\nabla\bar{\varphi}|^2)\nabla\bar{\varphi}\cdot\nabla\left(\varphi_1-\varphi_2\right)\right] dx\nonumber\\
	&&+\varepsilon^{-2}\int_{\Omega}\left[ \varepsilon^{-2}\left(\hat{\rho}^{(\varepsilon)}(|\nabla\bar{\varphi}|^2, \phi)-1\right)\nabla\bar{\varphi}\cdot\nabla\left(\varphi_1-\varphi_2\right) \right] dx\nonumber
	\end{eqnarray} 
  
By a similar argument as done in \textbf{Step 1} to obtain \eqref{ineq-phi} and \eqref{i2}, and by the H\"older inequality, we have:
	\begin{eqnarray}
	&&\left| I^{(\varepsilon)}(\varphi_1,  \bar{\varphi})-I^{(\varepsilon)}(\varphi_2,  \bar{\varphi}) \right| \nonumber\\ 
	&\leq& \frac{\tilde{C}_1}{2}\int_\Omega|\nabla(\tilde{\varphi}_1-\tilde{\varphi}_2)|^2dx+ C\varepsilon^{-2}\int_\Omega\left(|\nabla\varphi-e_1q_{\infty}|+|\phi|\right)|\nabla(\varphi_1-\varphi_2)|dx\nonumber\\ 
	&\leq&  C||\tilde{\varphi}_1-\tilde{\varphi}_2||_{\mathcal{V}}^2
	+
	C||\tilde{\varphi}_1-\tilde{\varphi}_2||_{\mathcal{V}}. \nonumber
	\end{eqnarray} 
%

	Now we can show the existence of the minimizer $\tilde{\varphi}^{(\varepsilon)}$ by the compactness argument via applying the continuity of the functional $I^{(\varepsilon)}(\bar{\varphi}+\varepsilon^2\tilde{\varphi},  \bar{\varphi})$ with respect to $\tilde{\varphi}$ in $\mathcal{V}$.

For the uniqueness, by \eqref{e:3.21}, we know that if $\varphi$ is the minimizer, then for any $\varphi_\ast\in\mathcal{V}$, we have that
\begin{equation}
I^{(\varepsilon)}(\varphi_\ast,\bar{\varphi})-I^{(\varepsilon)}(\varphi,\bar{\varphi})\geq \frac{C_1}{2}\varepsilon^{-4}\|\varphi_\ast-\varphi\|^2_{\mathcal{V}}.
\end{equation}

Therefore, 
\begin{equation}
I^{(\varepsilon)}(\varphi_\ast,\bar{\varphi})>I^{(\varepsilon)}(\varphi,\bar{\varphi}),
\end{equation}
for any $\varphi_\ast\in\mathcal{V}$ and $\varphi_\ast\neq\varphi$. It means that the minimizer is unique in $\mathcal{V}$.
	
Finally, \eqref{ff} easily follows from \eqref{I-phi} via replacing $\varphi$ by $\bar{\varphi}$, since we know that $I^{(\varepsilon)}(\varphi,\bar{\varphi})\leq I^{(\varepsilon)}(\bar{\varphi},\bar{\varphi})$.
\end{proof}

\section{Proof of Theorem \ref{MainT}}
By Remark \ref{rem:3.1}, we know that the minimizer of \textbf{Problem 4} is a weak solution of \textbf{Problem 3}.
In this section, we will first show the minimizer of \textbf{Problem 4} is a solution of \textbf{Problem 2} by remove the elliptic cut-off introduced in \textbf{Problem 3}, and then show the low Mach number limit to conclude the proof of Theorem \ref{MainT}.

\subsection{$C^{1,\alpha}$-Regularity of the minimizer}
Before removing the elliptic cut-off, let us consider the regularity of the derivatives of the solutions.

Firstly, we need the following proposition.

\begin{proposition}\label{prop-modify}
	Let $a^l_{ij}$ for $i,j = 1, \dots, n$ be $L^{\infty}$ functions on $B_1$, and $\lambda$ be a positive constant. Assume that
	$$
	\forall ~\xi\in\R^n, ~\lambda|\xi|^2\leq \sum_{i, j=1}^n a^l_{ij}\xi_i\xi_j\leq \lambda^{-1} |\xi|^2,
	~\mbox{and} ~f^l_i\in L^q, ~ q >n.
	$$
	Let $w(y)$ be a function in $H^1$. 
	 Suppose
	$$
	\sum_{i,j=1}^n\partial_i\left[a^l_{ij}(y) \partial_jw(y)\right] + \sum_{i=1}^n\partial_if^l_i =0
	$$
	holds in the distribution sense. Then $w(y)$ is H\"{o}lder continuous in $B_{{1}/{2}}$ and there exist two constants $0<\alpha\leq 1$, $k$, depending on $\lambda$ such that
	$$
	\sup_{y\in B_{1/2}}|w(y)|\leq k\left(||w||_{L^2(B_1)}+ ||f_i^l||_{L^q(B_1)} \right),
	$$
	$$
	\sup_{y_1, y_2\in B_{1/2}}\frac{|w(y_1)-w(y_2)|}{|y_1-y_2|^{\alpha}}
	\leq k\left(||w||_{L^2(B_1)}+ ||f_i^l||_{L^q(B_1)} \right).
	$$
\end{proposition}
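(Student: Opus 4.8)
The plan is to recognize this statement as the interior De Giorgi–Nash–Moser regularity theorem for a second-order \emph{divergence-form} equation $\operatorname{div}\!\big(A(y)\nabla w + f\big)=0$ with bounded measurable, uniformly elliptic coefficients $A=(a^l_{ij})$ and inhomogeneous term $f=(f^l_i)\in L^q$, $q>n$. Since the conclusion concerns only a linear equation, the nonlinearity of \textbf{Problem 3} is irrelevant here and one may treat $a^l_{ij}$ as fixed $L^\infty$ functions. The role of the hypothesis $q>n$ is the crucial point throughout: it makes the source contribution strictly subcritical in the iterations and thereby upgrades the $L^\infty$ bound into a genuine H\"older modulus of continuity. (This is essentially Gilbarg–Trudinger, Theorems 8.17, 8.22 and 8.24, adapted to the inhomogeneous divergence-form term; I would sketch the self-contained argument below.)

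\textbf{First,} I would establish the Caccioppoli energy inequality. Testing the weak formulation against $\zeta^2 (w-k)^+$ for a cutoff $\zeta\in C_c^\infty(B_1)$ and a level $k\in\R$, using ellipticity $\lambda|\xi|^2\le a^l_{ij}\xi_i\xi_j$ on the principal part and Young's plus H\"older's inequality on the right to absorb the terms from $\nabla\zeta$ and from $f$, gives, with $A_k:=\{w>k\}$,
$$
\int \zeta^2 |\nabla (w-k)^+|^2 \, dy \le C \int |\nabla\zeta|^2 \big((w-k)^+\big)^2\,dy + C\|f\|_{L^q}^2\, \big|A_k\cap \operatorname{supp}\zeta\big|^{1-2/q}.
$$
The exponent $1-2/q$ on the measure of the super-level set is produced by H\"older's inequality applied to $\int_{A_k}|f|^2$, and $q>n$ is what keeps it favorable.

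\textbf{Second,} I would derive the local sup bound by the De Giorgi (equivalently Moser) iteration: on a sequence of shrinking balls $B_{1/2+2^{-m}}$ with geometrically increasing levels, combining the inequality above with the Sobolev embedding $\|v\|_{L^{2^{*}}}\le C\|\nabla v\|_{L^2}$, $2^{*}=2n/(n-2)$ for $n\ge3$, yields a nonlinear recursion for the quantities $\int (w-k_m)^+{}^2$ that closes precisely because the measure exponent $1-2/q$ exceeds the critical threshold when $q>n$. Applying the same argument to $-w$ gives
$$
\sup_{B_{1/2}} |w| \le k\big(\|w\|_{L^2(B_1)} + \|f^l_i\|_{L^q(B_1)}\big).
$$

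\textbf{Third,} for H\"older continuity I would prove decay of oscillation. Applying the Caccioppoli inequality and the local boundedness to the two subsolutions $w-\sup_{B_{r}}w$ and $\inf_{B_{r}}w-w$, together with the De Giorgi isoperimetric (density) lemma relating the measure of intermediate level sets to the energy, one obtains a $\gamma\in(0,1)$ and $\alpha_0>0$ with
$$
\operatorname{osc}_{B_{r/2}} w \le \gamma\,\operatorname{osc}_{B_{r}} w + C\, r^{\alpha_0}\|f\|_{L^q}, \qquad \alpha_0 = 1-\frac{n}{q}>0 .
$$
A standard iteration lemma over dyadic scales then converts this into the geometric decay
$$
\operatorname{osc}_{B_r} w \le C\, r^{\alpha}\big(\|w\|_{L^2(B_1)} + \|f^l_i\|_{L^q(B_1)}\big)
$$
for some $\alpha\in(0,1]$, which is exactly the claimed H\"older seminorm estimate on $B_{1/2}$. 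The main obstacle is the uniform bookkeeping of the inhomogeneous term $\operatorname{div} f$ across both iterations: one must verify that its contribution never breaks the closing of the De Giorgi recursion in the sup bound and that it degrades the oscillation decay only by the harmless additive term $C r^{\alpha_0}\|f\|_{L^q}$. This is precisely where $q>n$ is used and is sharp, and it is the step I expect to require the most care.
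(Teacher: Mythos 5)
Your proposal is correct and matches the paper's treatment: the paper gives no proof at all, simply citing Theorem 8.24 of Gilbarg--Trudinger, which is precisely the result you identify (together with Theorems 8.17 and 8.22), and your sketch is the standard De Giorgi--Nash--Moser argument by which that theorem is proved, with the role of $q>n$ (yielding the subcritical exponent $1-2/q$ in the Caccioppoli estimate and the gain $r^{1-n/q}$ in the oscillation decay) correctly isolated. Nothing further is needed.
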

The proof of this proposition can be found in \cite{Gilbarg-Trudinger} (see Theorem 8.24).

Based on Proposition \ref{prop-modify}, we can show the $C^{\alpha}$-regularity of $\nabla\tilde{\varphi}$.

\begin{lemma}\label{lem-5}
The minimizer $\tilde{\varphi}^{(\varepsilon)}$ of \textbf{Problem 4} satisfies that	
	\begin{equation}\label{5.2}
	\left|\nabla\tilde{\varphi}^{(\varepsilon)}\right|\leq
	\frac{C}{(1+|x|)^{\beta'}},
	\end{equation}
	where $\beta'=\min\{\frac{n}{2}, \beta+\frac{n}{q}-1\}$,
and that
\begin{equation}\label{4.2x}
\|\nabla\tilde{\varphi}^{(\varepsilon)}\|_{C^{\alpha}(\Omega)}\leq C,
\end{equation}
where constant $C$ is independent of $\varepsilon$. 
\end{lemma}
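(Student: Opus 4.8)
The plan is to combine the uniform $L^2$ bound \eqref{ff} from Theorem \ref{th3} with the interior and boundary Hölder estimates of Proposition \ref{prop-modify} applied locally on balls, and then to upgrade the pointwise decay by a careful scaling argument that tracks the decay of the data $\nabla\bar\varphi - e_1 q_\infty$ and $\nabla\phi$ at infinity. First I would observe that the minimizer $\tilde\varphi^{(\varepsilon)}$ satisfies, in the distributional sense, a uniformly elliptic equation in divergence form obtained by linearizing $\eqref{equ-modify}_1$ around $\bar\varphi$: writing $\varphi^{(\varepsilon)} = \bar\varphi + \varepsilon^2\tilde\varphi^{(\varepsilon)}$ and expanding $\hat\rho^\varepsilon(|\nabla\varphi^{(\varepsilon)}|^2,\phi)\nabla\varphi^{(\varepsilon)}$, the equation for $\tilde\varphi^{(\varepsilon)}$ takes the form $\sum_{i,j}\partial_i[a_{ij}\partial_j\tilde\varphi^{(\varepsilon)}] + \sum_i\partial_i f_i = 0$, where $a_{ij}$ are bounded measurable with ellipticity constants $\hat\lambda_1,\hat\lambda_2$ uniform in $\varepsilon$ by \eqref{li-3.3}, and the forcing $f_i$ collects the contributions of the incompressible background and the conservative force $\nabla\phi$. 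The uniform ellipticity independent of $\varepsilon$ is the essential structural fact that makes all the constants below $\varepsilon$-independent.

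Next I would establish the uniform $C^\alpha$ bound \eqref{4.2x}. On any fixed ball $B_1(x_0)\subset\Omega$ (or its intersection with a boundary chart, after flattening $\Gamma$ using the $C^{2,\alpha}$ regularity of the profile), Proposition \ref{prop-modify} gives
\begin{equation*}
\|\nabla\tilde\varphi^{(\varepsilon)}\|_{C^\alpha(B_{1/2}(x_0))} \leq C\left(\|\nabla\tilde\varphi^{(\varepsilon)}\|_{L^2(B_1(x_0))} + \|f_i\|_{L^q(B_1(x_0))}\right),
\end{equation*}
and differentiating the equation (the coefficients inherit enough regularity from $\phi\in W^{1,q}$ and from the smoothness of $\hat\rho^\varepsilon$ in its arguments) upgrades the $L^2$ control of $\nabla\tilde\varphi^{(\varepsilon)}$ from \eqref{ff} to the stated $C^\alpha$ control of $\nabla\tilde\varphi^{(\varepsilon)}$; the forcing is in $L^q$ by the hypothesis \eqref{psicondition1} on $\nabla\phi$ and by \eqref{barpsiLinfty}. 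Since the right-hand side is bounded uniformly in $\varepsilon$ by \eqref{ff} and the data, covering $\Omega$ by such balls yields \eqref{4.2x}.

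The pointwise decay \eqref{5.2} is the main obstacle and requires a rescaling argument adapted to large $|x|$. For a point $x_0$ with $R=|x_0|$ large, I would rescale to the unit ball by setting $w(y)=\tilde\varphi^{(\varepsilon)}(x_0 + Ry)/R$ on $B_1$, so that $w$ solves a rescaled uniformly elliptic equation whose forcing $f_i^R$ now carries the decay weight: by \eqref{barpsiLinfty} the background term decays like $R^{-n/2}$, while by \eqref{psicondition1} the rescaled $L^q$ norm of the $\nabla\phi$ contribution is controlled by $R^{1-n/q-\beta}$. Feeding these into the sup-estimate of Proposition \ref{prop-modify} and using the $L^2$ bound \eqref{ff} (which, after rescaling and the Hardy inequality \eqref{Hardyforv}, contributes at the $R^{-n/2}$ rate) gives $\sup_{B_{1/2}}|w| \leq C\,R^{-\beta'}$ with $\beta'=\min\{\tfrac{n}{2},\,\beta+\tfrac{n}{q}-1\}$; undoing the scaling converts this into the claimed bound on $|\nabla\tilde\varphi^{(\varepsilon)}|$. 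The delicate point is to verify that the two competing decay rates combine to exactly $\beta'$ and that every constant stays uniform in $\varepsilon$, which again rests on the $\varepsilon$-uniform ellipticity and on the fact that the weight $(1+|x|^\beta)\nabla\phi$ lies in $L^q$; I expect the bookkeeping of the scaling exponents for the forcing term to be the step most prone to error.
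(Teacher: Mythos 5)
Your overall strategy---uniform-in-$\varepsilon$ ellipticity from \eqref{li-3.3}, the De Giorgi--Nash estimate of Proposition \ref{prop-modify}, and a rescaling at infinity whose bookkeeping of the decay of $\nabla\bar\psi$ and $\nabla\phi$ produces $\beta'=\min\{\frac n2,\beta+\frac nq-1\}$---is the same as the paper's. But the decay step as you set it up would fail, for two related reasons. First, you rescale the potential, $w(y)=\tilde\varphi^{(\varepsilon)}(x_0+Ry)/R$, and invoke the sup-estimate of Proposition \ref{prop-modify}; that proposition (Theorem 8.24 of Gilbarg--Trudinger) controls $\sup|w|$ and the H\"older seminorm of $w$, not $\nabla w$, so ``undoing the scaling'' does not convert $\sup_{B_{1/2}}|w|\le CR^{-\beta'}$ into a bound on $|\nabla\tilde\varphi^{(\varepsilon)}|$; with merely $L^\infty$ coefficients there is no interior gradient estimate to fall back on. Second, and more seriously, the forcing of the undifferentiated divergence-form equation for $\tilde\varphi^{(\varepsilon)}$ is $f_i=\varepsilon^{-2}(\hat\rho^{(\varepsilon)}-1)\partial_i\bar\varphi$, and $\partial_i\bar\varphi\to q_\infty\delta_{i1}$ at infinity: the $R^{-n/2}$ decay of \eqref{barpsiLinfty} belongs to $\nabla\bar\psi$, not to $\nabla\bar\varphi$, while the factor $\varepsilon^{-2}(\hat\rho^{(\varepsilon)}-1)$ is only known to be bounded (its spatial decay would require the very decay of $\nabla\tilde\varphi^{(\varepsilon)}$ and of $\phi$ that you are trying to prove). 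So the ``background term'' in your forcing does not decay at all, and the scaling argument collapses.

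The paper's remedy is to differentiate the equation in $x_k$ before doing anything else. Setting $\Phi=\partial_k\tilde\varphi^{(\varepsilon)}$ and using $\Delta\partial_k\bar\varphi=0$, one obtains $\sum_{i,j}\partial_i(\hat a_{ij}\partial_j\Phi)=-\sum_i\partial_i\bigl(\sum_j f_{ij}\,\partial_{j}\partial_k\bar\psi\bigr)-\sum_i\partial_i(\hat b_k\partial_i\varphi^{(\varepsilon)})$ with $f_{ij}=\varepsilon^{-2}(\hat a_{ij}-\delta_{ij})$ uniformly bounded in $L^\infty$. The differentiation annihilates the constant part $q_\infty e_1$ of $\nabla\bar\varphi$, so the background forcing now carries the decaying factor $\partial_j\partial_k\bar\psi$, and the $\nabla\phi$ forcing is exactly $\hat b_k\partial_i\varphi^{(\varepsilon)}=O(|\partial_k\phi|)\in L^q$. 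One then rescales $w(y)=R^{\beta'}\Phi(Ry)$ and $v(y)=R^{\beta'}\partial_k\bar\psi(Ry)$ on the annulus $\{\frac12<|y|<2\}$ (a ball $B_1$ in $y$ would pull back to $B_R(x_0)$, which can meet the obstacle), checks $\|w\|_{L^2},\|v\|_{L^2}\le CR^{\beta'-n/2}\le C$ from \eqref{ff} and \eqref{barpsiL2}, and $\|R^{\beta'+1}\nabla\phi(R\cdot)\|_{L^q}\le C$ from \eqref{psicondition1} since $\beta'+1\le\beta+\frac nq$; Proposition \ref{prop-modify} applied to $w$ then gives \eqref{5.2} directly, since $w$ already is (a rescaling of) the gradient. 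Your computation of the $L^2$ rate via the Hardy inequality and your identification of the two competing exponents are correct; what needs to be fixed is the object you rescale and the decay you attribute to the background forcing.
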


\begin{proof} We divide the proof into three steps.
	
\textbf{Step 1.}
Let
$\Phi=\partial_k\tilde{\varphi}^{(\varepsilon)}$, $\bar{\varphi}'=\partial_k\bar{\varphi}$ for $k=1,\dots,n$. Then by the straightforward calculation, $\Phi$ satisfies
$$
\sum_{i, j =1}^n\partial_i\left(\hat{a}_{ij}\partial_j\Phi\right)+\sum_{i=1}\partial_{i}(\hat{b}_k\partial_i\varphi^{(\varepsilon)})+\varepsilon^{-2}\sum_{i, j =1}^n\partial_i\left(\hat{a}_{ij}\partial_j\bar{\varphi}'\right)=0.
$$
Since $\Delta\bar{\varphi}=0$, $\Delta\bar{\varphi}'=0$. We can change the equation above to:
$$
\sum_{i, j =1}^n\partial_i\left(\hat{a}_{ij}\partial_j\Phi\right)=-\varepsilon^{-2}\sum_{i, j =1}^n\partial_i\left((\hat{a}_{ij}-\delta_{ij})\partial_j\bar{\varphi}'\right)-\sum_{i=1}\partial_{i}(\hat{b}_k\partial_i\varphi^{(\varepsilon)}).
$$
Here, we introduce
\begin{eqnarray}
f_{ij}&=&\varepsilon^{-2}(\hat{a}_{ij}-\delta_{ij})\nonumber\\
&=&\frac{\hat{\rho}^{(\varepsilon)}-1}{\varepsilon^2}\delta_{ij}-\varepsilon^{-2}\frac{\hat{q}_\Lambda(|\nabla\varphi^{(\varepsilon)}|^2, \phi)\partial_i\varphi^{(\varepsilon)}\partial_j\varphi^{(\varepsilon)}}
{(c^\varepsilon)^2},
\end{eqnarray}
then, 
\begin{equation}
\varepsilon^{-2}\sum_{i, j =1}^n\partial_i\left((\hat{a}_{ij}-\delta_{ij})\partial_j\bar{\varphi}'\right)=\sum_{i=1}^n\partial_i(\sum_{j=1}^nf_{ij}\partial_{j}\bar{\varphi}'),
\end{equation}
Now we are going to show the uniform $L^\infty$ estimate of $f_{ij}$. 
For the first term,
\begin{eqnarray}
\frac{\hat{\rho}^{(\varepsilon)}-1}{\varepsilon^2}&=&\frac{\hat{\rho}^{(\varepsilon)}\left(|\nabla\varphi^{(\varepsilon)}|^2, \phi\right)-1}{\varepsilon^2}\nonumber\\
&=&\frac{\tilde{h}^{-1}\left(\frac{\varepsilon^2\left(q^2_\infty-\left(\hat{q}(|\nabla\varphi^{(\varepsilon)}|^2, \phi)\right)\right)}{2}+\tilde{h}(1)\right)-1}{\varepsilon^2}\nonumber\\
&=&\frac{\tilde{h}^{-1}\left(\frac{\varepsilon^2\left(q^2_\infty-\left(\hat{q}(|\nabla\varphi^{(\varepsilon)}|^2, \phi)\right)\right)}{2}+\tilde{h}(1)\right)-\tilde{h}^{-1}(h(1))}{\varepsilon^2}\nonumber\\
&=&\frac{q^2_\infty-\hat{q}(|\nabla{\varphi}^{(\varepsilon)}|^2, \phi)}{2}\int_{0}^{1} 	\left( \tilde{h}^{-1} \right)'\left(t\varepsilon^2\frac{q^2_\infty-\hat{q}(|\nabla{\varphi}^{(\varepsilon)}|^2, \phi)}{2}+\tilde{h}(1)\right) dt
\label{diffrho}
\end{eqnarray}
For the second term,
\begin{equation}
\varepsilon^{-2}\frac{\hat{q}_\Lambda(|\nabla\varphi^{(\varepsilon)}|^2, \phi)\partial_i\varphi^{(\varepsilon)}\partial_j\varphi^{(\varepsilon)}}
{(c^\varepsilon)^2}=\frac{\hat{q}_\Lambda(|\nabla\varphi^{(\varepsilon)}|^2, \phi)  \partial_i\varphi^{(\varepsilon)}\partial_j\varphi^{(\varepsilon)}}
{\tilde{p}'(\rho^\varepsilon)}.
\end{equation}

Therefore, due to the cut-off, we have the uniform $L^{\infty}$ estimate of $f_{ij}$.

Also, for $i, k=1, \cdots, n$,  
\begin{equation}\label{4.6}
|\hat{b}_k\partial_i\varphi^{(\varepsilon)}|\leq C|\partial_k\phi|.
\end{equation}
By the assumption that $\nabla\phi\in L^q$, we know $\hat{b}_i\partial_i\varphi^{(\varepsilon)}$ are bounded in $L^q$ for $q>n$.



\textbf{Step 2.} Now we can show the interior estimates based on the equations derived in \textbf{Step 1}. For any bounded interior subregion of $\Omega$,  since $\bar{\varphi}'$ satisfies $\Delta\bar{\varphi}'=0$, and $||\nabla \bar{\varphi}||_{L^2}$ is uniformly bounded, from the standard elliptic estimate, we have that $\partial_j\bar{\varphi}'$ is locally $C^{\infty}$ and uniformly bounded, \textit{i.e.}, 
$$
||\partial_i\bar{\varphi}'||_{L^\infty}\leq C||\nabla \bar{\varphi}||_{L^2},
$$ 
where constant $C$ does not depend on $\varepsilon$. Then, by Proposition \ref{prop-modify}, we have the interior $L^\infty$ and local H\"{o}lder estimate of $\nabla\tilde{\varphi}^{(\varepsilon)}$.

Next for the boundary estimate near $\partial\Omega$, one can apply Theorem 8.29 in \cite{Gilbarg-Trudinger} to replace Proposition \ref{prop-modify} to follow the arguments above to show the boundary estimates near $\partial\Omega$.

\medskip
\textbf{Step 3.} Finally, let us consider the estimates of $\nabla\tilde{\varphi}$ when $|x|$ is sufficiently large. Let 
$$
\bar{\psi}':=\partial_k\bar{\psi},
$$ 
where $\bar{\psi}=\bar{\varphi}-q_{\infty}x_1$.  For any sufficiently large $R$ with $\left\{\frac{R}{2}<|x|<2R \right\} \subset \Omega$, define
\begin{equation}\label{5.5}
w(y) =R^{\beta'} \Phi(Ry)\qquad \mbox{and} \qquad v(y)=R^{\beta'}\bar{\psi}'(Ry)
\end{equation}
on $\left\{\frac{1}{2}<|y|<2\right\}$. Note that $\partial_i\bar{\varphi}'=\partial_i\bar{\psi}'$. Then on $\left\{\frac{1}{2}<|y|<2\right\}$, $w(y)$ satisfies
$$
\sum_{i, j =1}^n\frac{\partial}{\partial y_i}\left(\tilde{a}_{ij}\frac{\partial}{\partial y_j}w\right)=-\sum_{i=1}^n\frac{\partial}{\partial y_i}(\sum_{j=1}^nf_{ij}\frac{\partial}{\partial y_j}v)-\sum_{i=1}\frac{\partial}{\partial y_i}(R^{\beta'+1}\hat{b}_k\partial_i\varphi^{(\varepsilon)}).
$$

By \eqref{barpsiL2} and \eqref{ff}, we have that
	$$
\int_{\left\{\frac{1}{2}<|y|<2 \right\}}|w(y)|^2 dy \leq C R^{2\beta'-n}\leq C', ~ ~ \mbox{ and } ~ ~ \int_{\left\{\frac{1}{2}<|y|<2 \right\}}|v(y)|^2 dy  \leq C R^{2\beta'-n}\leq C',
$$
with $R^{\beta'+1}\tilde{b}_k\partial_{i}\varphi^{(\varepsilon)}$ being bounded by the observation that
$$
\int_{\left\{\frac{1}{2}<|y|<2 \right\}}|R^{\beta'+1}\nabla_x\phi(y)|^q dy \leq C.
$$
The inequality above is due to the second condition for $\phi$ in \eqref{psicondition1} and $\beta'+1\leq \beta+\frac{n}{q}$.

Since $v$ also satisfies $\Delta_{y} v=0$, the standard elliptic estimate implies that 
$$
\left\|\frac{\partial}{\partial y_j}v\right\|_{L^\infty(\{\frac{1}{2}<|y|<2\})}\leq C||v||_{L^2(\{\frac{1}{2}<|y|<2\})}\leq C,
$$ 
where constant $C$ only depends on the dimension.
Applying Proposition \ref{prop-modify}, we have
$$
|w(y)|\leq C.
$$
So let $|y|=1$, then
\begin{equation}
|\Phi(Ry)|=|w(y)|\leq \frac{C}{R^{\beta'}}.
\end{equation}
Going back to \eqref{5.5}, we have for sufficiently large $|x|$, and for $i=1, \cdots, n$,
$$
\left|\nabla\tilde{\varphi}^{(\varepsilon)}(x)\right|\leq \frac{C}{|x|^{\beta'}}.
$$
 
It means \eqref{5.2} holds. Now we can follow the standard argument to lift the regularity to show that $\nabla\tilde{\varphi}^{(\varepsilon)}$ is uniformly H\"older continuous (see \cite{Gilbarg-Trudinger}), \textit{i.e.}, \eqref{4.2x} holds.
\end{proof}

\subsection{Uniqueness of solutions of Problem 3}
In this subsection, we will show the uniqueness of the modified flow such that we can remove the cut-off and apply the Bers skill to show the existence of solutions of \textbf{Problem 2}. 

\begin{lemma}\label{6.1}
\textbf{Problem 3} admits a unique classical solution $\varphi$ up to a constant such that
\begin{equation}\label{unicon}
\int_{\Omega}|\nabla \varphi-(q_\infty,0,\cdots, 0)|^2 dx\leq C.
\end{equation} 
\end{lemma}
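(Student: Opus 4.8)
The plan is to reduce both existence and uniqueness to the variational problem already solved in Theorem \ref{th3}. For existence, I would take the unique minimizer $\tilde{\varphi}^{(\varepsilon)}\in\mathcal{V}$ provided by Theorem \ref{th3} and set $\varphi=\bar{\varphi}+\varepsilon^2\tilde{\varphi}^{(\varepsilon)}$. By Remark \ref{rem:3.1} this $\varphi$ is a weak solution of \eqref{equ-modify}, and by Lemma \ref{lem-5}, $\nabla\tilde{\varphi}^{(\varepsilon)}\in C^{\alpha}(\Omega)$ (see \eqref{4.2x}), so $\nabla\varphi=\nabla\bar{\varphi}+\varepsilon^2\nabla\tilde{\varphi}^{(\varepsilon)}$ is H\"older continuous and $\varphi$ is in fact a classical solution of \textbf{Problem 3}. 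Since $\nabla\varphi-q_\infty e_1=\nabla\bar{\psi}+\varepsilon^2\nabla\tilde{\varphi}^{(\varepsilon)}$, estimate \eqref{unicon} follows at once from \eqref{barpsiL2} and \eqref{ff} together with the triangle inequality.

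For uniqueness, I would show that every classical solution satisfying \eqref{unicon} must coincide, up to a constant, with this minimizer. Let $\varphi$ be such a solution and put $\tilde{\varphi}=\varepsilon^{-2}(\varphi-\bar{\varphi})$. The first step is to check $\tilde{\varphi}\in\mathcal{V}$: writing $\nabla(\varphi-\bar{\varphi})=(\nabla\varphi-q_\infty e_1)-\nabla\bar{\psi}$ shows $\nabla\tilde{\varphi}\in L^2(\Omega)$ by \eqref{unicon} and \eqref{barpsiL2}, while the Hardy inequality (Theorem \ref{Hardy}) guarantees this finite-energy function is a genuine element of $\mathcal{V}$. The second step is to verify that $\tilde{\varphi}$ is a critical point of the functional $I^{(\varepsilon)}(\bar{\varphi}+\varepsilon^2\,\cdot\,,\bar{\varphi})$: multiplying $\eqref{equ-modify}_1$ by a test function $\eta\in C_c^\infty(\R^n)$, integrating by parts and using the boundary condition $\eqref{equ-modify}_2$ recovers exactly the first-variation identity of Remark \ref{rem:3.1}, and this extends to all of $\mathcal{V}$ by a density argument.

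The conclusion then follows from convexity. Because $I^{(\varepsilon)}$ is uniformly convex on $\mathcal{V}$ (the inequality \eqref{e:3.21} established in Step 3 of the proof of Theorem \ref{th3}), any critical point is automatically the global minimizer, and that minimizer is unique. Hence $\tilde{\varphi}=\tilde{\varphi}^{(\varepsilon)}$ as elements of $\mathcal{V}$, so $\nabla\varphi=\nabla\bar{\varphi}+\varepsilon^2\nabla\tilde{\varphi}^{(\varepsilon)}$ is uniquely determined; since only $\nabla\varphi$ enters \textbf{Problem 3}, $\varphi$ itself is unique up to an additive constant (the same constant ambiguity already present in $\bar{\varphi}$).

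I anticipate that the delicate point is the second step of the uniqueness argument, namely justifying that a merely classical solution carrying only the finite-energy bound \eqref{unicon}, with no a priori pointwise decay, yields a bona fide critical point in $\mathcal{V}$. This requires controlling the boundary contribution at infinity in the integration by parts and passing, via density, from compactly supported test functions to arbitrary elements of $\mathcal{V}$; the Hardy inequality of Theorem \ref{Hardy} and the uniform $L^2$ gradient bounds are precisely what make both reductions legitimate.
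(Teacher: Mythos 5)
Your existence argument coincides with the paper's (minimizer from Theorem \ref{th3}, Remark \ref{rem:3.1}, regularity from Lemma \ref{lem-5}, and \eqref{unicon} from \eqref{ff} and \eqref{barpsiL2}), but your uniqueness argument takes a genuinely different route. The paper takes two classical solutions $\varphi_1,\varphi_2$ satisfying \eqref{unicon}, writes the equation for the difference $\hat{\varphi}=\varphi_1-\varphi_2$ in divergence form with coefficients $\check{a}_{ij}=\int_1^2(\cdot)\,d\tau$ obtained by averaging along the segment $\varphi_\tau$, and then runs an energy argument: it tests against $\hat{\varphi}\eta_R$ for a family of cutoffs $\eta_R$, and kills the annular error terms as $R\to\infty$ using the Hardy inequality \eqref{Hardyforv}, the decay \eqref{5.2} and the bound \eqref{hatl2}, concluding $\int_\Omega\sum\check{a}_{ij}\partial_i\hat{\varphi}\,\partial_j\hat{\varphi}\,dx=0$ and hence $\nabla\varphi_1=\nabla\varphi_2$ by ellipticity \eqref{li-3.3}. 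You instead show that any classical solution with finite relative energy is a critical point of the uniformly convex functional $I^{(\varepsilon)}$ and therefore must be its unique minimizer. Your route is arguably cleaner in that it reuses \eqref{e:3.21} and avoids introducing the averaged coefficients $\check{a}_{ij}$ and the explicit cutoff computation; the paper's route is more robust in that it is a purely PDE-level comparison and does not require re-embedding an arbitrary solution into the variational framework. The price you pay is exactly the re-embedding: you must (i) place $\tilde{\varphi}=\varepsilon^{-2}(\varphi-\bar{\varphi})$ in $\mathcal{V}$ and (ii) verify the first-variation identity against all of $\mathcal{V}$, which is the same ``behavior at infinity'' issue the paper handles with $\eta_R$.

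Two points deserve care. First, your appeal to Theorem \ref{Hardy} to conclude $\tilde{\varphi}\in\mathcal{V}$ is circular as stated: the Hardy inequality is asserted \emph{for} elements of $\mathcal{V}$, so it cannot certify membership. Since $\mathcal{V}$ is the completion of restrictions of $C_c^\infty(\R^n)$ functions under the Dirichlet norm, you need the standard fact that for an exterior domain in $\R^n$, $n\geq 3$, a function with $\nabla\tilde{\varphi}\in L^2(\Omega)$ equals, after subtraction of a suitable constant (absorbed into the ``up to a constant'' ambiguity), an element of the completion; this should be stated as such, or proved by an explicit cutoff approximation. Second, the density step in (ii) requires that the linear functional $\eta\mapsto\int_\Omega\bigl(\hat{\rho}^{(\varepsilon)}(|\nabla\varphi|^2,\phi)\nabla\varphi-\nabla\bar{\varphi}\bigr)\cdot\nabla\eta\,dx$ be bounded on $\mathcal{V}$, i.e.\ that $\hat{\rho}^{(\varepsilon)}\nabla\varphi-\nabla\bar{\varphi}\in L^2(\Omega)$; this does hold, by splitting it as $\hat{\rho}^{(\varepsilon)}(\nabla\varphi-\nabla\bar{\varphi})+(\hat{\rho}^{(\varepsilon)}-1)\nabla\bar{\varphi}$ and using \eqref{unicon}, \eqref{barpsiL2}, the expansion of $\hat{\rho}^{(\varepsilon)}-1$ as in \eqref{diffrho} together with $\phi\in L^2(\Omega)$, but you should say so explicitly rather than leave it at ``a density argument.'' With these two points filled in, your proof is complete and correct.
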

\begin{proof}	
Exitence is proved by Remark \ref{rem:3.1}, Theorem \ref{th3}, and Lemma \ref{lem-5}. \eqref{unicon} follows from \eqref{ff}. So we only need to consider the uniqueness.

Assume there are two different solutions of \textbf{Problem 3} $\varphi_1$ and $\varphi_2$. Let 
$$
\hat{\varphi}=\varphi_1-\varphi_2\qquad\mbox{ and }\qquad
\varphi_{\tau}=(2-\tau)\varphi_1+(\tau-1)\varphi_2.
$$ 

Since both $\varphi_1$ and $\varphi_2$ satisfy \eqref{unicon}, we have
\begin{equation}\label{hatl2}
\int_{\Omega}|\nabla \hat{\varphi}|^2 dx\leq C.
\end{equation} 

Moreover, by the straightforward calculation, $\hat{\varphi}$ satisfies that
\begin{equation}\label{diffequ}
\sum_{i,j=1}^{n}\partial_i(\check{a}_{ij}\partial_j\hat{\varphi})=0,
\end{equation}
with
\begin{equation}
\check{a}_{ij}\left(\nabla \varphi^{(\varepsilon)}\right) =\int_1^2 \hat{\rho}^{(\varepsilon)}\left(|\nabla\varphi_{\tau}|^2, \phi\right)\left(\delta_{ij} -
\frac{\hat{q}_\Lambda(|\nabla\varphi_{\tau}|^2, \phi)\partial_i\varphi_{\tau}\partial_j\varphi_{\tau}}
{(c^\varepsilon)^2}\right) d\tau.
\end{equation}

Define a series of test functions $\eta_R(x)>0$ for $R>1$ with uniform $C^2$-bounds, such the function $\eta_R$ is supported in $|x|\leq R$ and identically equals to $1$ in $|x|\leq R-1$.

By multiplying $\hat{\varphi}\eta_R$ on both the sides of \eqref{diffequ} and integrating by part in $\Omega$, we obtain:
\begin{eqnarray}\label{cutoffuni}
0&=&\int_{B_{R-1}\cap\Omega}\sum_{i,j=1}^{n}\check{a}_{ij}\partial_i\hat{\varphi}\partial_j\hat{\varphi} d x.\nonumber\\
& & +\int_{(B_{R}-B_{R-1})\cap\Omega}\sum_{i,j=1}^{n}\check{a}_{ij}\partial_i\hat{\varphi}\partial_j\hat{\varphi} \eta_R d x+\int_{(B_{R}-B_{R-1})\cap\Omega}\sum_{i,j=1}^{n}\check{a}_{ij}\partial_i\hat{\varphi}\partial_j \eta_R \hat{\varphi} d x\nonumber
\end{eqnarray}
By \eqref{Hardyforv}, \eqref{5.2}, and \eqref{hatl2} , passing the limit $R\rightarrow\infty$, and by the dominant convergence theorem, the identity above becomes
\begin{equation}
\int_{\Omega}\sum_{i,j=1}^{n}\check{a}_{ij}\partial_i\hat{\varphi}\partial_j\hat{\varphi} d x=0
\end{equation}
which implies $\nabla\varphi_1=\nabla\varphi_2$ by \eqref{li-3.3}.
\end{proof}


\begin{remark}
By Remark \ref{rem:3.1} and Lemma \ref{6.1}, it is easy to see when $|\nabla\varphi^{(\varepsilon)}|<\mathring{q}^{\varepsilon_0}_\theta$, $\varphi^{(\varepsilon)}$ is equal to the solution obtained in \cite{Dong2}.
\end{remark}

\subsection{Proof of Theorem \ref{MainT}}

In this subsection, we will conclude the proof of Theorem \ref{MainT} by showing the solutions of \textbf{Problem 3} are solutions of \textbf{Problem 2}, and then consider the convergence rate of the low Mach number limit.
 
\begin{proof}

Up to now, we have shown that for any given fixed cut-off parameters $\theta$ and $\varepsilon_0$, there exists a unique solution of \textbf{Problem 3}, which is denoted as $\varphi^{(\varepsilon)}(x;\varepsilon_0, \theta)$. 
It is noticeable that for a given $\theta\in(0,1)$, if $|\nabla\varphi^{(\varepsilon)}(x;\varepsilon_0, \theta)|<\mathring{q}^{\varepsilon_0}_\theta(\phi)$, then $\varphi^{(\varepsilon)}(x;\varepsilon_0, \theta)$ is the unique solution of \textbf{Problem 3}. Note that
\begin{equation}\label{4.13}
|\nabla\varphi^{(\varepsilon)}(x;\varepsilon_0, \theta)|= |\varepsilon^2\nabla\tilde{\varphi}^{(\varepsilon)}(x;\varepsilon_0, \theta)+\nabla\bar{\varphi}(x)|\leq \max|\nabla\bar{\varphi}|+C(\varepsilon_0, \theta) \varepsilon^2.
\end{equation}
Then, there exists $\varepsilon_{0, \theta}\leq \varepsilon_0$ such that  $|\nabla\varphi^{(\varepsilon)}(x;\varepsilon_0, \theta)|<\mathring{q}^{\varepsilon_0}_\theta(\phi)$,  for any $0<\varepsilon<\varepsilon_{0, \theta}$. From the definition and uniqueness of $\varphi^{(\varepsilon)}$, $\{\varepsilon_{0, \theta}\}$ is a non-decreasing sequence respect to $\theta$ with upper bound $\varepsilon_0$. Then, we introduce $\varepsilon_{0, cr}=\varlimsup_{0<\theta<1}\varepsilon_{0, \theta}$ such that for $0<\varepsilon<\varepsilon_{0, cr}$, there exists a unique solution  $\tilde{\varphi}(x; \varepsilon_0)$, such that
\begin{equation}
|\nabla \varphi^{(\varepsilon)}(x;\varepsilon_0)|=|\varepsilon^2\nabla\tilde{\varphi}^{(\varepsilon)}(x;\varepsilon_0)+\nabla\bar{\varphi}(x)|< \mathring{q}^{\varepsilon_0}_{cr}(\phi),
\end{equation} 
which means $M^\varepsilon(\phi)<1$.  In this case, the cut-off can be removed such that the solution is a solution of \textbf{Problem 2}.

After removing the subsonic cut-off, we will optimize the critical value $\varepsilon_{cr}$. 
For each $0<\varepsilon_0<1$,  there exists an $\varepsilon_{0, cr}$, with $0<\varepsilon_{0, cr}\leq \varepsilon_0<1$. Then, the critical value $\varepsilon_c=\sup_{0<\varepsilon<1}\varepsilon_{0, cr}$ satisfies that for any $\varepsilon\in(0, \varepsilon_c)$, $0<M^\varepsilon(\phi)<1$ and $|\nabla\tilde{\varphi}^{(\varepsilon)}|$ is uniform bounded with respect to $\varepsilon$.

Finally, let us consider the convergence rate of the low Mach number limit. Note that $\varphi^{(\varepsilon)}=\bar{\varphi}+\varepsilon^2\tilde{\varphi}^{(\varepsilon)}$ holds in $C^{1,\alpha}(\Omega)$, so $\nabla\varphi^{(\varepsilon)}=\nabla\bar{\varphi}+\nabla\tilde{\varphi}^{(\varepsilon)}$ in the H\"{o}lder space, which equals to
\begin{equation}\label{uconergencerate}
u^{\varepsilon}=\bar{u}+\varepsilon^2\tilde{u}^{(\varepsilon)}.
\end{equation}
It is noticeable that $\phi\in W^{1, q}_{loc}$ for $q>n$, so $\phi$ is in some H\"{o}lder space. 
Therefore, for the density, by \eqref{2.21}, $\rho^\varepsilon\in C^\alpha(\Omega)$. Then $p^\varepsilon\in C^\alpha(\Omega)$. By the straightforward computation like the one in \eqref{diffrho}, we have  
\begin{equation}\label{rhoconergencerate}
\rho^{\varepsilon}=1+O(\varepsilon^2).
\end{equation} 
Consequently, the definition of the Mach number yields $M^\varepsilon=O(\varepsilon)$.
Finally, the gradients of the pressure satisfy
\begin{eqnarray}
\nabla p^\varepsilon-\nabla \bar{p}&=& -\mbox{div}(\rho^\varepsilon u^\varepsilon\otimes u^\varepsilon)+\mbox{div}(\bar{u}\otimes \bar{u})\nonumber\\
&=&\mbox{div}( \bar{u}\otimes \bar{u}-\rho^\varepsilon u^\varepsilon\otimes u^\varepsilon ).
\end{eqnarray}
From \eqref{uconergencerate} and \eqref{rhoconergencerate}, we can conclude: in the weak sense, 
\begin{equation}
\nabla p^\varepsilon=\nabla \bar{p}+O(\varepsilon^2).
\end{equation}

It completes the proof of Theorem \ref{MainT}.
\end{proof}

\bigskip
\textbf{Ackowledgments}: The authors would like to thank Professor Song Jiang for evaluable suggestions. The research of Mingjie Li is supported by the NSFC Grant No. 11671412. The research of Tian-Yi Wang  was supported in part by the NSFC Grant No. 11601401 and the Fundamental Research Funds for the Central Universities(WUT: 2017 IVA 072 and 2017 IVB 066). The research of Wei Xiang was supported in part by the Grants Council of the HKSAR, China (Project No. CityU 21305215, Project No. CityU 11332916, Project No. CityU 11304817 and Project No. CityU 11303518).

\end{document}